
\documentclass[11pt]{article}

\usepackage{charter}
\usepackage[charter]{mathdesign}
\usepackage{eucal}
\usepackage{microtype}

\usepackage{subcaption}

\usepackage[margin=1in,letterpaper]{geometry}

\usepackage{enumitem}

\usepackage{natbib}

\usepackage{amsmath}
\usepackage{amsthm}
\newtheorem{thm}{Theorem}
\newtheorem{lem}{Lemma}
\newtheorem{cor}{Corollary}

\newtheorem{obs}{Observation}

\theoremstyle{definition}
\newtheorem{definition}{Definition}

\usepackage[ruled,vlined,linesnumbered]{algorithm2e}
\DontPrintSemicolon

\usepackage{ifthen,tikz}
\usetikzlibrary{positioning}
\usetikzlibrary{shapes.geometric}
\usetikzlibrary{calc}

\newcommand{\tree}[2][]{T\ifthenelse{\equal{#1}{}}{}{_{#1}}^{(#2)}}


\usepackage[colorinlistoftodos]{todonotes}

\definecolor{lightblue}{RGB}{180,180,255}

\newcommand{\leoo}[1]{#1}

\definecolor{lightgreen}{RGB}{180,255,180}

\newcommand{\remie}[1]{#1}


\begin{document}

\title{A Unifying Characterization of Tree-based Networks and Orchard Networks using Cherry Covers}
\author{Leo van Iersel\footnote{Delft Institute of Applied Mathematics, Delft
    University of Technology, Van Mourik Broekmanweg 6, 2628 XE, Delft, The
    Netherlands, \{L.J.J.vanIersel, R.Janssen-2,
      Y.Murakami\}@tudelft.nl. Research funded in part by the Netherlands
    Organization for Scientific Research (NWO) Vidi grant
    639.072.602, and partly by the 4TU Applied Mathematics Institute.} \and
  Remie Janssen\footnotemark[1] \and
  Mark Jones\footnote{Centrum Wiskunde \& Informatica (CWI), 1098 XG Amsterdam, The Netherlands, 
markelliotlloyd@gmail.com.
Research funded by by the Netherlands Organisation for Scientific Research (NWO) through Gravitation Programme Networks 024.002.003} \and
  Yukihiro Murakami\footnotemark[1] \and
  Norbert Zeh\footnote{Faculty of Computer Science, Dalhousie University, 6050
    University Ave, Halifax, NS B3H 1W5, Canada, nzeh@cs.dal.ca.  Research
    funded in part by the Natural Sciences and Engineering Research Council of
    Canada.}} 
\maketitle

\begin{abstract}
Phylogenetic networks are used to study evolutionary relationships between species in biology.
Such networks are often categorized into classes by their topological features, which stem from both biological and computational motivations.
We study two network classes in this paper: tree-based networks and orchard networks.
Tree-based networks are those that can be obtained by inserting edges between the edges of an underlying tree. \leoo{Orchard} 
networks are a recently introduced generalization of the class of tree-child networks.
Structural characterizations have already been discovered for tree-based networks; this is not the case for orchard networks.
In this paper, we introduce cherry covers---a unifying characterization of both network classes---in which we decompose the edges of the networks into so-called cherry shapes and reticulated cherry shapes.
We show that cherry covers can be used to characterize 
the class of tree-based networks \leoo{as well as} the class of orchard networks. \leoo{Moreover, we also generalize these results to non-binary networks.}
\end{abstract}

\section{Introduction}

Phylogenetic trees and networks are used to represent the evolutionary history of species in biology and languages in linguistics.
Given a set of present-day species, a tree can be used to depict how lineages have diverged from their most recent common ancestor.
Networks are a generalization of trees, and a network can also depict how lineages may have converged in the past, as a result of reticulate events such as hybridization and horizontal gene transfer.

Phylogenetic networks have been categorized into many topological classes for both biological and computational incentives (for an overview of a few binary network classes, see, for example,~\cite{huson2010phylogenetic}).
One of the largest of these network classes is the class of tree-based networks.
Hatched from an ongoing debate on whether evolutionary histories should \leoo{or should not} be viewed as tree-like with reticulate events sprinkled in (e.g., \leoo{in the context of}
horizontal gene transfer within prokaryotes~\cite{martin2011early}), tree-based networks were introduced as those that can be obtained from trees by inserting reticulate arcs between the arcs of the tree~\cite{francis2015phylogenetic}.
In their seminal paper, Francis and Steel explored the mathematical properties of these tree-based networks and provided a linear time algorithm to check whether a binary network was tree-based.
Following this, structural characterizations for binary tree-based networks were introduced in the form of forbidden substructures~\cite{zhang2016tree}, matchings~\cite{jetten2018nonbinary}, and using antichains and path-partitions~\cite{francis2018new}.
Jetten and van Iersel further extended the matching characterization result to non-binary networks, and showed that it is possible to decide if a non-binary network is tree-based in polynomial time~\cite{jetten2018nonbinary}.

We briefly comment on the difference between binary and non-binary networks.
Networks are often presented so that at each speciation event, two lineages diverge from one lineage, and at each reticulate event, two lineages converge into one lineage---this is what we would call a binary network.
In practice, many networks do not adhere to such restrictions.
For example, ambiguities in the order of how some evolutionary events have unfolded (soft polytomy) or multiple speciation events that occur almost simultaneously from a single species (hard polytomy) can easily break this ideal structure.
Such problems give rise to vertices that represent one lineage diverging into three or more lineages.
The same stands for reticulate events.
In this paper we consider non-binary networks (i.e., not necessarily binary), and therefore our results will naturally hold for binary networks.

Within the tree-based network class lies the recently introduced class of orchard networks.
These networks generalize the prominent class of tree-child networks. \leoo{It was shown that orchard networks are} uniquely reconstructible from their ancestral profiles~\cite{erdHos2019class} and that it \leoo{can} be determined whether two \remie {binary} orchard networks \leoo{are} isomorphic in polynomial time~\cite{janssen2018solving}.
Orchard networks contain either a cherry (two leaves with a common parent) or a reticulated cherry (two leaves with distinct parents, for which one parent is the parent of another, and the lower parent is a reticulation), such that reducing a cherry or a reticulated cherry yields an orchard network of smaller size.
With this reduction, one can obtain a sequence of ordered pairs 
-- which corresponds to reducing either a cherry or a reticulated cherry that involves the two leaves in the pair -- that iteratively reduces \leoo{the orchard network to} a single leaf.
Janssen and Murakami, and Erd\H{o}s et al. have independently shown that such a reduction can be done in any order, and therefore that it can be decided in linear time whether a network is orchard~\cite{erdHos2019class,janssen2018solving}.
While these sequences do characterize orchard networks, the recursive nature of this characterization may make it difficult to use.

In this paper, we present a unified structural \leoo{(non-recursive)} characterization for both non-binary tree-based networks and non-binary orchard networks.
We first decompose networks into so-called \emph{cherry shapes} and \emph{reticulated cherry shapes}.
If each edge of the network belongs to at least one of these two structures, then we say that the network has a \emph{cherry cover}.
This turns out to be a necessary and a sufficient condition for the network to be tree-based (Theorem~\ref{thm:CherryDecompositiontree-based}).
In addition we consider an ordering on the cherry  and reticulated cherry shapes of a network.
We prove that a network is orchard precisely if it has an acyclic cherry cover (Theorem~\ref{thm:CherryDecompositionorchard}).

\section{Preliminaries}

A \emph{(phylogenetic non-binary) network} is a directed acyclic graph with a unique vertex of indegree-0 and outdegree-1 (the \emph{root}), vertices of indegree-1 and outdegree-0 that are bijectively labelled by~$X$ (the \emph{leaves}), and all other vertices have either indegree-1 (\emph{tree vertices}) or outdegree-1 (\emph{reticulations}) but not both.
A \emph{(phylogenetic) tree} is a network with no reticulations.

Given an edge~$uv$ in a network, we say that~$u$ is a \emph{parent} of~$v$ and that~$v$ is a \emph{child} of~$u$.
We say that~$u$ and~$v$ are the \emph{tail} and \emph{head} of the edge, respectively.
An edge~$uv$ is a \emph{reticulation edge} if the vertex~$v$ is a reticulation, and it is the \emph{root edge} if~$u$ is the root.
The \emph{reticulation number} is the total number of reticulation edges minus the total number of reticulation vertices.
A vertex in a network is \emph{binary} if it has total degree at most three.
A binary tree vertex is called a \emph{bifurcation} and a non-binary tree vertex is a \emph{multifurcation}.
A network is \emph{semi-binary} if all tree vertices are binary; it is \emph{binary} if all vertices are binary.
A \emph{semi-binary resolution} of a network $N$ is a semi-binary network that can be turned into $N$ by contracting edges. 
A \emph{binary resolution} of a network $N$ is a binary network that can be turned into $N$ by contracting edges.

\subsection{Cherry cover}

A \emph{cherry shape} is a subgraph on three distinct vertices~$x,y,p$ with edges~$px$ and~$py$.
The \emph{internal vertex} of a cherry shape is~$p$, and the \emph{endpoints} are~$x$ and~$y$.
A \emph{reticulated cherry shape} is a subgraph on four vertices~$x,y,p_x,p_y$ with edges~$p_xx, p_yp_x,p_yy$, such that~$p_x$ is a reticulation in the network.
The \emph{internal vertices} of a reticulated cherry shape are~$p_x$ and~$p_y$, and the \emph{endpoints} are~$x$ and~$y$.
The \emph{internal reticulation} and the \emph{middle edge} of a reticulated cherry shape is~$p_x$ and~$p_yp_x$, respectively.
The edge~$p_yy$ is called the \emph{free edge} of the reticulated cherry shape.
We will often refer to cherry shapes and the reticulated cherry shapes by their edges (e.g., we would denote the above cherry shape~$\{px,py\}$ and the reticulated cherry shape~$\{p_xx, p_yp_x, p_yy\}$).
We say that an edge~$uv$ is \emph{covered} by a cherry or reticulated cherry shape~$C$ if~$uv\in C$.
Given a set~$P$ of cherry and reticulated cherry shapes, we say that an edge is \emph{covered} by~$P$ if the edge is covered by at least one shape in~$P$.
We now investigate how sets of cherry shapes and reticulated cherry shapes may form a \emph{decomposition / cover} for a given binary / semi-binary / non-binary network.

\subsubsection{Binary networks}

\begin{definition}
	A \emph{cherry decomposition} of a binary network is a set~$P$ of cherry shapes and reticulated cherry shapes, such that each edge except for the root edge is covered exactly once by~$P$.
\end{definition}
An example of a binary network with its cherry decomposition is presented in Figure~\ref{fig:NSBBNetworks}.
\begin{lem}\label{lem:CherryCoverCount}
	Let~$N$ be a binary network on~$n$ leaves and reticulation number~$r$, and let~$P$ be a cherry decomposition of~$N$.
	Then~$P$ contains exactly~$n-1$ cherry shapes and~$r$ reticulated cherry shapes.
\end{lem}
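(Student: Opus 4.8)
The plan is to derive two linear relations between the number $a$ of cherry shapes in $P$, the number $b$ of reticulated cherry shapes in $P$, the leaf count $n$, and the reticulation number $r$, and then solve for $a$ and $b$. Since $P$ covers every non-root edge of $N$ exactly once, and the root edge lies in no cherry or reticulated cherry shape (in each possible case its tail would be forced to have the wrong degree), the edge sets of the shapes in $P$ partition $E(N)$ minus the root edge; as a cherry shape consists of two edges and a reticulated cherry shape of three, this gives the first relation $2a + 3b = |E(N)| - 1$. A degree-sum count for binary networks — the root has outdegree $1$, leaves have outdegree $0$, tree vertices have indegree $1$ and outdegree $2$, and reticulations have indegree $2$ and outdegree $1$ — shows that a binary network with $n$ leaves and $\rho$ reticulations has $n + \rho - 1$ tree vertices and $|E(N)| = 2n + 3\rho - 1$, while $r = 2\rho - \rho = \rho$ since each reticulation carries exactly two reticulation edges. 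Hence the first relation reads $2a + 3b = 2(n-1) + 3r$.

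The second relation is $b = r$, and I would obtain it by showing that the map sending each reticulated cherry shape in $P$ to its internal reticulation is a bijection onto the set of reticulations of $N$. For injectivity, two reticulated cherry shapes with the same internal reticulation $v$ would both cover the unique out-edge of $v$ (the edge from $v$ to its child), contradicting that every non-root edge is covered exactly once. For surjectivity, let $v$ be any reticulation of $N$ and let $vw$ be its out-edge; since $vw$ is not the root edge, it is covered by some $C \in P$. Comparing outdegrees in $N$ with those forced by the shape definitions shows that $C$ must be a reticulated cherry shape and $vw$ must be the edge from its internal reticulation to the child of that reticulation: indeed, $v$ cannot be the internal vertex of a cherry shape, and $vw$ cannot be the middle edge or the free edge of a reticulated cherry shape, because in each of those cases the tail of the edge (namely $v$) would have outdegree at least $2$ in $N$, whereas a reticulation has outdegree $1$. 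Hence $v$ is the internal reticulation of $C$. Substituting $b = r$ into $2a + 3b = 2(n-1) + 3r$ yields $a = n-1$, which is the claim.

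I expect the surjectivity step to be the only part requiring care: one must systematically check, for the out-edge of an arbitrary reticulation, that it can be covered neither by a cherry shape nor as the middle edge or free edge of a reticulated cherry shape, leaving only the ``internal reticulation'' edge $p_x x$. Each case closes immediately by comparing the outdegree of $v$ in $N$ with the outdegree forced on that vertex by its role in the shape, so the argument is short, but this is where binarity and the precise definitions of the two shapes genuinely enter. Everything else — the edge count and the degree-sum identities — is routine bookkeeping.
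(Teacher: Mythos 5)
Your proposal is correct and follows essentially the same route as the paper's proof: count the non-root edges via degree sums to get $2(n-1)+3r$, show that the reticulated cherry shapes correspond exactly to the $r$ outgoing edges of reticulations (the paper states this as an observation, you spell out the bijection), and then deduce the number of cherry shapes by edge count. The extra care you take with the root edge and with injectivity/surjectivity is fine but does not change the argument.
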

\begin{proof}
	The network~$N$ contains~$n-1+r$ tree vertices,~$r$ reticulation vertices, and~$n+1$ vertices of degree~$1$ (including the root).
	By the Handshaking Lemma, we get that the total number of edges in~$N$ is~$2n+3r-1$.
	Then the total number of edges of~$N$ excluding the root edge is~$2(n-1) + 3r$.
	Observe that every outgoing edge of a reticulation vertex must be covered by a reticulated cherry shape.
	Since there are~$r$ such edges and because a reticulated cherry shape is composed of~$3$ edges, we have that~$3r$ of the edges of~$N$ are covered by reticulated cherry shapes, and that the rest of the edges of~$N$ must be covered by cherry shapes.
	As each cherry shape is composed of~$2$ edges, this implies that there must be~$n-1$ cherry shapes in~$P$.
	Therefore~$P$ contains exactly~$n-1$ cherry shapes and~$r$ reticulated cherry shapes.
\end{proof}

\subsubsection{Semi-binary networks}

We extend the notion of a cherry decomposition to semi-binary networks by introducing the following ``bulged version'' of a network.
\begin{definition}
    Let $N$ be a network. Then the \emph{bulged version} of $N$, $B(N)$, is the multigraph obtained from $N$ by replacing the outgoing edge of each reticulation vertex with indegree-$k$ by $k-1$ parallel edges. If $M$ is the bulged version of a network $M'$, then we define $B^{-1}(M)=M'$.
\end{definition}
If $N$ is binary, we have $N=B(N)$.
For any network~$N$, we have~$B^{-1}(B(N)) = N$.
Observe that in general, bulged versions of networks are not always networks.
In bulged versions of networks, we call vertices of indegree at least~$2$ and outdegree at least~$1$ a \emph{reticulation} (note that reticulations in this case may be vertices of outdegree more than~$1$).
\emph{Roots} are vertices of indegree-$0$, \emph{tree vertices} are of indegree-$1$, and \emph{leaves} are of outdegree-$0$.

\begin{definition}
	A \emph{cherry decomposition} of the bulged version of a semi-binary network~$N$ is a set~$P$ of cherry shapes and reticulated cherry shapes, such that each edge of~$B(N)$, except for the root edge, is covered exactly once by~$P$.
\end{definition}
Observe that a reticulation vertex in the bulged version of the network is always mapped to an internal reticulation of a reticulated cherry shape in the cherry decomposition.
This brings us to the following lemma.
\begin{lem}
	Let~$N$ be a semi-binary network on~$n$ leaves and reticulation number~$r$, and let~$P$ be a cherry decomposition of~$N$.
	Then~$P$ contains exactly~$n-1$ cherry shapes and~$r$ reticulated cherry shapes.
\end{lem}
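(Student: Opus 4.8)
The plan is to follow the proof of Lemma~\ref{lem:CherryCoverCount} almost verbatim, the only genuinely new point being a structural observation about reticulations in a bulged network. First I would count the edges of $B(N)$. Write $k_1,\dots,k_\rho\ge 2$ for the indegrees of the $\rho$ reticulations of $N$; by definition of reticulation number, $r=\sum_i(k_i-1)$, and a degree count in $N$ (equating the sum of outdegrees with the sum of indegrees, exactly as in the proof of Lemma~\ref{lem:CherryCoverCount}) shows that $N$ has precisely $n-1+r$ tree vertices. In $B(N)$ the root still contributes one outgoing edge (the uncovered root edge), each of the $n-1+r$ tree vertices contributes two outgoing edges, and the reticulation with indegree $k_i$ now contributes $k_i-1$ outgoing edges, so reticulations contribute $\sum_i(k_i-1)=r$ outgoing edges in total. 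Hence $B(N)$ has $2(n-1)+3r$ non-root edges, and since a cherry decomposition covers each exactly once, if $P$ contains $c$ cherry shapes and $\bar c$ reticulated cherry shapes then $2c+3\bar c=2(n-1)+3r$.

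It remains to prove $\bar c=r$, after which $c=n-1$ follows. Here is the new ingredient: by construction of $B(N)$, all $k_i-1$ outgoing edges of a reticulation $w$ of $B(N)$ are parallel, i.e.\ they share the same head $z$ (the child of $w$ in $N$). Consequently $w$ cannot be the internal vertex of a cherry shape $\{wx,wy\}$, since that would force the distinct vertices $x,y$ to both equal $z$; and $w$ cannot be the upper internal vertex $p_y$ of a reticulated cherry shape $\{p_xx,\,wp_x,\,wy\}$, since that would force the distinct vertices $p_x,y$ to both equal $z$. Therefore every outgoing edge of $w$ is covered by a reticulated cherry shape in which $w$ plays the role of the internal reticulation $p_x$ (as the edge $p_xx$); since such a shape uses exactly one outgoing edge of its internal reticulation and each edge of $B(N)$ is covered exactly once, the reticulated cherry shapes with internal reticulation $w$ are in bijection with the $k_i-1$ outgoing edges of $w$. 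Conversely, the internal reticulation of any reticulated cherry shape is a reticulation of $B(N)$ and the shape uses exactly one of its outgoing edges. Summing over all reticulations of $B(N)$ gives $\bar c=\sum_i(k_i-1)=r$, and substituting into $2c+3\bar c=2(n-1)+3r$ yields $c=n-1$.

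The main obstacle is the middle step: one must check carefully that a reticulation of $B(N)$ cannot take part in a cherry decomposition in any way other than as an internal reticulation, and it is precisely the parallel-edge structure of $B(N)$ (the point behind the remark immediately preceding the lemma, that a reticulation of the bulged network is always mapped to an internal reticulation) that rules out the alternatives. Everything else is a routine adaptation of the edge count in the proof of Lemma~\ref{lem:CherryCoverCount}.
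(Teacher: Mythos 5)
Your proof is correct and takes essentially the same route as the paper, whose proof simply invokes the argument of Lemma~\ref{lem:CherryCoverCount}: you count the non-root edges of $B(N)$ as $2(n-1)+3r$ and then observe that, because all outgoing edges of a reticulation in $B(N)$ are parallel, such a vertex can only occur in a shape as the internal reticulation, giving a bijection between reticulated cherry shapes and the $r$ outgoing edges of reticulations (this is precisely the observation the paper records immediately before the lemma). No gaps; nothing further is needed.
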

\begin{proof}
	The proof follows an analogous argument used in the proof of Lemma~\ref{lem:CherryCoverCount}.
\end{proof}

\subsubsection{Non-binary networks}

For non-binary networks, we generalize the concept of cherry decompositions by allowing certain edges to be covered multiple times.

\begin{definition}\label{def:CherryCover}
    A \emph{cherry cover} of the bulged version of a non-binary network~$N$ is a set~$P$ of cherry shapes and reticulated cherry shapes with the following properties on~$B(N)$:
    \begin{itemize}
	\item each edge except for the root edge is covered by at least one shape in~$P$,
	\item each outgoing edge of a reticulation vertex is covered exactly once,
	\item each edge covered by the middle edge of a reticulated cherry shape is covered exactly once.
    \end{itemize}
\end{definition}

Note that cherry covers may contain cherry shapes that cover the same edge of the bulged version of the network, as long as the above properties are respected (see Figure~\ref{fig:CoverofMultifurcations}).
Note also that there may exist many distinct cherry covers for the same network.

\begin{figure}[]
	\begin{subfigure}[b]{.33\textwidth}
	\centering
	\resizebox{\linewidth}{!}{
	\begin{tikzpicture}[ every node/.style={draw, circle, fill, inner sep=0pt},
	square/.style={regular polygon, regular polygon sides=4}]
       	\tikzset{edge/.style={very thick}}
	       	\begin{scope}[xshift=0cm,yshift=0cm]
       		\node[] (root) at (0,5)	{a};
	       	
       		\node[below = 0.5 of root] 				(t1)	{a};
			\node[below left = 0.75 and 0.5 of t1]		(t2)	{a};
			\node[below left = 0.75 and 0.5 of t2]		(t3)	{a};
			\node[right = 2.5 of t3]				(t5)	{a};
			
			\node[square, below = 1.75 of t2]	(r1)	{a};
			\node[square, right = 1 of r1]		(r2)	{a};

			\node[below = 1 of r1]		(l1)	{a};
			\node[below = 1 of r2]		(l2)	{a};
			
			\draw[edge]			(root) -- (t1) node[draw = none, fill = none, left = 5mm, midway]{\large $N$};
			\draw[edge]			(t1)--(t2);
			\draw[edge]			(t1)--(t5);
			\draw[edge]			(t2)--(t3) node[draw = none, fill = none, above left = 1mm, midway]{\large $e$};
			\draw[edge]			(t2)--(r1);
			\draw[edge]			(t2)--(r2);
			
			\draw[edge]			(t3)--(r1);
			\draw[edge]			(t5)--(r1);
			
			\draw[edge]			(t3)--(r2);
			\draw[edge]			(t5)--(r2);
			
			\draw[edge]			(r1)--(l1);
			\draw[edge]			(r2)--(l2);
			
            		\node[draw = none, fill = none, below=1mm of l1]  	(a) {\large $a$};
            		\node[draw = none, fill = none, below=1mm of l2]  	(b) {\large $b$};
			
			\end{scope}
			\begin{scope}[xshift=3.5cm,yshift=0cm]
       		\node[] (root) at (0,5)	{a};
	       	
       		\node[below = 0.5 of root] 				(t1)	{a};
			\node[below left = 0.75 and 0.5 of t1]		(t2)	{a};
			\node[below left = 0.75 and 0.5 of t2]		(t3)	{a};
			\node[right = 2.5 of t3]				(t5)	{a};
			
			\node[square, below = 1.75 of t2]	(r1)	{a};
			\node[square, right = 1 of r1]		(r2)	{a};

			\node[below = 1 of r1]		(l1)	{a};
			\node[below = 1 of r2]		(l2)	{a};
			
			\draw[edge]			(root) -- (t1) node[draw = none, fill = none, left = 5mm, midway]{\large $B(N)$};
			\draw[edge]			(t1)--(t2);
			\draw[edge]			(t1)--(t5);
			\draw[edge, dashed]			(t2.west)--(t3.west);
			\draw[edge, dotted]			(t2.east)--(t3.east);
			\draw[edge, densely dashdotted]			(t5)--(r1);
			\draw[edge, dotted]			(t2)--(r2);
			
			\draw[edge]			(t3)--(r1);
			\draw[edge, dashed]			(t2)--(r1);
			
			\draw[edge]			(t3)--(r2);
			\draw[edge, densely dashdotted]			(t5)--(r2);
			
			\draw[edge, bend left, dashed]			(r1)edge(l1);
			\draw[edge, bend right, densely dashdotted]				(r1)edge(l1);
			\draw[edge, bend left]			(r2)edge(l2);
			\draw[edge, bend right, dotted]		(r2)edge(l2);
			
            		\node[draw = none, fill = none, below=1mm of l1]  	(a) {\large $a$};
            		\node[draw = none, fill = none, below=1mm of l2]  	(b) {\large $b$};
			
			\end{scope}
	 	\end{tikzpicture}}
	 	\caption{}
 	\end{subfigure}
 	\begin{subfigure}[b]{.33\textwidth}
 	\centering
 	\resizebox{\linewidth}{!}{
	\begin{tikzpicture}[every node/.style={draw, circle, fill, inner sep=0pt},
	square/.style={regular polygon, regular polygon sides=4}]
       	\tikzset{edge/.style={very thick}}
	       	\begin{scope}[xshift=0cm,yshift=0cm]
       		\node[] (root) at (0,5)	{a};
	       	
       		\node[below = 0.5 of root] 				(t1)	{a};
			\node[below left = 0.75 and 0.5 of t1]		(t2)	{a};
			\node[below left = 0.75 and 0.5 of t2]		(t3)	{a};
			\node (t4) at ($(t1)!0.5!(t2)$) {a};
			\node[right = 2.5 of t3]				(t5)	{a};
			
			\node[square, below = 1.75 of t2]	(r1)	{a};
			\node[square, right = 1 of r1]		(r2)	{a};

			\node[below = 1 of r1]		(l1)	{a};
			\node[below = 1 of r2]		(l2)	{a};
			
			\draw[edge]			(root) -- (t1) node[draw = none, fill = none, left = 5mm, midway]{\large $N^s$};
			\draw[edge]			(t1)--(t2);
			\draw[edge]			(t1)--(t5);
			\draw[edge]			(t2)--(t3);
			\draw[edge]			(t2)--(r1);
			\draw[edge]			(t4)--(r2);
			
			\draw[edge]			(t3)--(r1);
			\draw[edge]			(t5)--(r1);
			
			\draw[edge]			(t3)--(r2);
			\draw[edge]			(t5)--(r2);
			
			\draw[edge]			(r1)--(l1);
			\draw[edge]			(r2)--(l2);
			
            		\node[draw = none, fill = none, below=1mm of l1]  	(a) {\large $a$};
            		\node[draw = none, fill = none, below=1mm of l2]  	(b) {\large $b$};
			\end{scope}
			\begin{scope}[xshift=3.5cm,yshift=0cm]
			\node[] (root) at (0,5)	{a};
	       	
       		\node[below = 0.5 of root] 				(t1)	{a};
			\node[below left = 0.75 and 0.5 of t1]		(t2)	{a};
			\node[below left = 0.75 and 0.5 of t2]		(t3)	{a};
			\node (t4) at ($(t1)!0.5!(t2)$) {a};
			\node[right = 2.5 of t3]				(t5)	{a};
			
			\node[square, below = 1.75 of t2]	(r1)	{a};
			\node[square, right = 1 of r1]		(r2)	{a};

			\node[below = 1 of r1]		(l1)	{a};
			\node[below = 1 of r2]		(l2)	{a};
			
			\draw[edge]			(root) -- (t1) node[draw = none, fill = none, left = 5mm, midway]{\large $B(N^s)$};
			\draw[edge]			(t1)--(t4);
			\draw[edge]			(t1)--(t5);
			\draw[edge, dashed]			(t2)--(t3);
			\draw[edge, dashed]			(t2)--(r1);
			\draw[edge, dotted]			(t4)--(r2);
			\draw[edge, dotted]			(t4)--(t2);
			
			\draw[edge]			(t3)--(r1);
			\draw[edge, densely dashdotted]			(t5)--(r1);
			
			\draw[edge]			(t3)--(r2);
			\draw[edge, densely dashdotted]			(t5)--(r2);
			
			\draw[edge, bend left, dashed]			(r1)edge(l1);
			\draw[edge, bend right, densely dashdotted]				(r1)edge(l1);
			\draw[edge, bend left]			(r2)edge(l2);
			\draw[edge, bend right, dotted]		(r2)edge(l2);
			
            		\node[draw = none, fill = none, below=1mm of l1]  	(a) {\large $a$};
            		\node[draw = none, fill = none, below=1mm of l2]  	(b) {\large $b$};
			\end{scope}
	 	\end{tikzpicture}}
	 	\caption{}
 	\end{subfigure}
 	\begin{subfigure}[b]{.33\textwidth}
	\centering
	\resizebox{\linewidth}{!}{
	\begin{tikzpicture}[every node/.style={draw, circle, fill, inner sep=0pt},
	square/.style={regular polygon, regular polygon sides=4}]
       	\tikzset{edge/.style={very thick}}
	       	\begin{scope}[xshift = 0cm, yshift=0cm]
       		\node[] (root) at (0,5)	{a};
	       	
       		\node[below = 0.5 of root] 				(t1)	{a};
			\node[below left = 0.75 and 0.5 of t1]		(t2)	{a};
			\node[below left = 0.75 and 0.5 of t2]		(t3)	{a};
			\node (t4) at ($(t1)!0.5!(t2)$) {a};
			\node[right = 2.5 of t3]				(t5)	{a};
			
			\node[square, below = 1.75 of t2]	(r1)	{a};
			\node[square, right = 1 of r1]		(r2)	{a};
			\node[square] (r3) at ($(t3)!0.5!(r1)$) {a};
			\node[square] (r4) at ($(t5)!0.5!(r2)$) {a};

			\node[below = 1 of r1]		(l1)	{a};
			\node[below = 1 of r2]		(l2)	{a};
			
			\draw[edge]			(root) -- (t1) node[draw = none, fill = none, left = 5mm, midway]{\large $N^b$};
			\draw[edge]			(t1)--(t4);
			\draw[edge]			(t4)--(t2);
			\draw[edge]			(t1)--(t5);
			\draw[edge]			(t2)--(t3);
			\draw[edge]			(t2)--(r3);

			\draw[edge]			(t3)--(r1);
			\draw[edge]			(t5)--(r1);
			
			\draw[edge]			(t3)--(r2);
			\draw[edge]			(t5)--(r2);
			
			\draw[edge]			(t4)--(r4);
			
			\draw[edge]			(r1)--(l1);
			\draw[edge]			(r2)--(l2);
			
            		\node[draw = none, fill = none, below=1mm of l1]  	(a) {\large $a$};
            		\node[draw = none, fill = none, below=1mm of l2]  	(b) {\large $b$};
			\end{scope}
			\begin{scope}[xshift = 3.5cm, yshift = 0cm]
       		\node[] (root) at (0,5)	{a};
	       	
       		\node[below = 0.5 of root] 				(t1)	{a};
			\node[below left = 0.75 and 0.5 of t1]		(t2)	{a};
			\node[below left = 0.75 and 0.5 of t2]		(t3)	{a};
			\node (t4) at ($(t1)!0.5!(t2)$) {a};
			\node[right = 2.5 of t3]				(t5)	{a};
			
			\node[square, below = 1.75 of t2]	(r1)	{a};
			\node[square, right = 1 of r1]		(r2)	{a};
			\node[square] (r3) at ($(t3)!0.5!(r1)$) {a};
			\node[square] (r4) at ($(t5)!0.5!(r2)$) {a};

			\node[below = 1 of r1]		(l1)	{a};
			\node[below = 1 of r2]		(l2)	{a};
			
			\draw[edge]			(root) -- (t1) node[draw = none, fill = none, left = 5mm, midway]{\large $N^b$};
			\draw[edge]			(t1)--(t4);
			\draw[edge, dotted]			(t4)--(t2);
			\draw[edge]			(t1)--(t5);
			\draw[edge, dashed]			(t2)--(t3);
			\draw[edge, dashed]			(t2)--(r3);

			\draw[edge]			(t3)--(r3);
			\draw[edge, dashed]			(r3)--(r1);
			\draw[edge, densely dashdotted]			(t5)--(r1);
			
			\draw[edge]			(t3)--(r2);
			\draw[edge, densely dashdotted]			(t5)--(r4);
			\draw[edge, dotted]			(r4)--(r2);
			
			\draw[edge, dotted]			(t4)--(r4);
			
			\draw[edge, densely dashdotted]			(r1)--(l1);
			\draw[edge]			(r2)--(l2);
			
            		\node[draw = none, fill = none, below=1mm of l1]  	(a) {\large $a$};
            		\node[draw = none, fill = none, below=1mm of l2]  	(b) {\large $b$};
			\end{scope}
	 	\end{tikzpicture}}
	 	\caption{}
 	\end{subfigure}
	\caption{All arcs in networks are directed from the root towards the leaves (down the page).
   	Reticulations are indicated by square vertices.
	(a) A non-binary network~$N$ and a bulged version~$B(N)$ of~$N$.
	Observe that the leaves~$a$ and~$b$ are incident to parallel edges in~$B(N)$, as they are both adjacent to reticulation vertices of indegree-$3$.
	A cherry cover of~$N$ is visualized on the edges of~$B(N)$ by different edge types.
	The edge~$e$ in~$N$ is duplicated in~$B(N)$ to depict what happens when an edge is covered twice by a cherry cover.
	(b) A semi-binary resolution~$N^s$ of~$N$, obtained by resolving the multifurcation in~$N$.
	The bulged version of~$N^s$ is shown on the right, together with the cherry decomposition of~$N^s$.
	(c) A binary resolution~$N^b$ of~$N$. A cherry decomposition of~$N^b$ is displayed on the right network.}
	\label{fig:NSBBNetworks}
\end{figure}
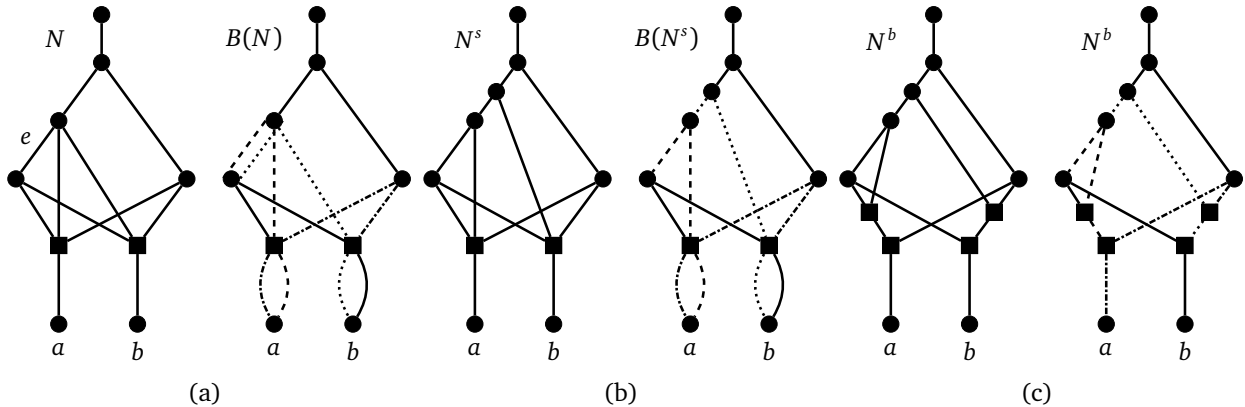

\begin{lem}\label{lem:DoubleCoveredEdge}
	Let~$P$ be a cherry cover for a non-binary network~$N$, and let~$uv$ be an edge of~$B(N)$ that is covered by at least two shapes in~$P$.
	Then~$u$ must be a multifurcation.
\end{lem}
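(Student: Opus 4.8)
The plan is to eliminate, one possibility at a time, every option for the vertex $u$ other than ``multifurcation'', using only the three defining properties of a cherry cover from Definition~\ref{def:CherryCover}. First I would dispatch the trivial cases. Since $uv$ is an edge with tail $u$, the vertex $u$ is not a leaf. If $u$ were a reticulation of $B(N)$, then $uv$ would be an outgoing edge of a reticulation and hence covered exactly once by the second property, contradicting the hypothesis that $uv$ is covered by at least two shapes. So $u$ is either the root or a tree vertex.

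Next I would classify, for each shape $C\in P$ that covers $uv$, the role played by $uv$ inside $C$. The edge $uv$ cannot be covered as the middle edge of a reticulated cherry shape, for then the third property would force it to be covered exactly once; and $uv$ cannot be the reticulation edge $p_xx$ of a reticulated cherry shape, for then $u=p_x$ would be a reticulation, which we have excluded. Hence, in every shape $C$ covering $uv$, either $C$ is a cherry shape with internal vertex $u$, or $C$ is a reticulated cherry shape whose free edge is $uv$ and in which $u$ plays the role of $p_y$. In both situations $C$ contains a second outgoing edge of $u$, say $uw_C$ with $w_C\neq v$, and whenever $C$ is a reticulated cherry shape this edge $uw_C$ is exactly its middle edge (and $w_C$ is a reticulation). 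In particular $u$ has outdegree at least two, so $u$ is not the root; combined with the first paragraph, $u$ is a tree vertex.

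It then remains to show that $u$ has outdegree at least three. Suppose for contradiction that $u$ has exactly two children $v$ and $w$, and pick two distinct shapes $C_1,C_2\in P$ covering $uv$. By the classification above, each $C_i$ contains a second outgoing edge $uw_{C_i}$ of $u$ with $w_{C_i}\neq v$; since $w$ is the only remaining child, $uw_{C_i}=uw$. Thus each $C_i$ is either the cherry shape $\{uv,uw\}$ or a reticulated cherry shape with free edge $uv$ and middle edge $uw$. As $P$ contains at most one shape equal to $\{uv,uw\}$ and $C_1\neq C_2$, at least one of $C_1,C_2$ is a reticulated cherry shape with middle edge $uw$; by the third property of Definition~\ref{def:CherryCover}, the edge $uw$ is then covered exactly once. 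But the other of $C_1,C_2$ also contains $uw$ (as an edge of the cherry shape $\{uv,uw\}$, or again as a middle edge), so $uw$ is covered at least twice, a contradiction. Therefore $u$ is a tree vertex of outdegree at least three, i.e., a multifurcation.

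I expect the last paragraph to be the real obstacle: the key point is that when $u$ has only two children, the edge $uv$ and its unique sibling $uw$ are so tightly linked inside any covering shape that a double cover of $uv$ is necessarily ``transmitted'' to $uw$ as a double cover, one of whose shapes uses $uw$ as a middle edge --- which the third property forbids. The only other point requiring care is the bookkeeping between the vertex types of $B(N)$ and of $N$, so that a tree vertex of $B(N)$ with outdegree at least three is correctly identified with a multifurcation of $N$; this is immediate because bulging modifies the graph only at reticulation vertices.
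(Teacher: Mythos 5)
Your proof is correct and follows essentially the same route as the paper's: rule out the root, leaves, and reticulations via the cover conditions, then kill the bifurcation case by observing that any two shapes covering $uv$ must both contain the unique sibling edge $uw$, forcing a middle edge to be covered twice in violation of the third condition of Definition~\ref{def:CherryCover}. Your write-up just makes explicit the case analysis that the paper's proof states more tersely.
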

\begin{proof}
	Observe first that~$u$ cannot be the root since the root edge is not covered by~$P$, and it also cannot be a vertex of outdegree-$0$.
	Furthermore,~$u$ cannot be a reticulation vertex by definition (second condition of Definition~\ref{def:CherryCover}).
	Therefore~$u$ must be a tree vertex.
	Suppose that~$u$ is a bifurcation, and let~$uw$ be an edge of~$B(N)$ that is not~$uv$.
	Then the edges~$uv$ and~$uw$ must be contained in the same shape~$A$ in~$P$.
	But then no shape of~$P$ other than~$A$ can contain the edge~$uv$ without violating the third condition of the cherry cover definition.
	Thus~$u$ cannot be a bifurcation.
	It follows that~$u$ must be a multifurcation.
\end{proof}

It follows that cherry covers are indeed a generalization of cherry decompositions, since a cherry cover of a binary or a semi-binary network covers each edge of the (bulged version of the) network exactly once.
Observe that the converse of Lemma~\ref{lem:DoubleCoveredEdge} is not necessarily true.
That is, given a cherry cover of a network, it is not always the case that an outgoing edge of a multifurcation is covered more than once (see Figure~\ref{fig:CoverofMultifurcations}).

%
%

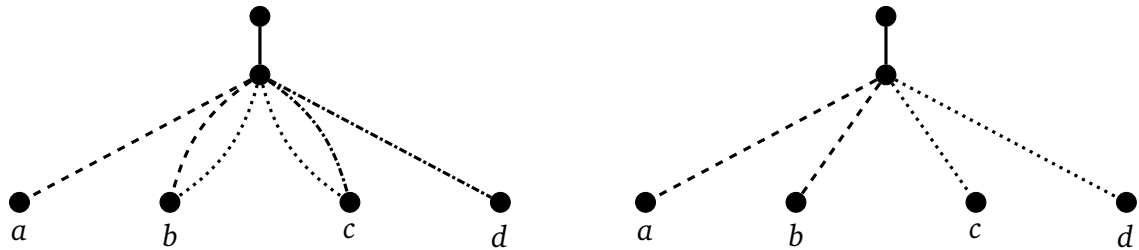
\begin{figure}
	\begin{subfigure}[b]{.5\textwidth}
		\centering
		\begin{tikzpicture}[every node/.style={draw, circle, fill, inner sep=0pt},
		square/.style={regular polygon, regular polygon sides=4}]
	       	\tikzset{edge/.style={very thick}}
	       	
	       	\node[] (root) at (0,5)	{a};
	       	
	       	\node[below = 0.5 of root] 			(t1)	{a};
			\node[below left = 1.5 and 3 of t1] 	(l1) {a};
			\node[below left = 1.5 and 1 of t1] 	(l2) {a};
			\node[below right = 1.5 and 1 of t1] 	(l3) {a};
			\node[below right= 1.5 and 3 of t1] 	(l4) {a};
			
			\draw[edge]			(root) -- (t1);

			\draw[edge, dashed]							(t1)--(l1);
			\draw[edge, bend right=20, dashed]			(t1)edge(l2);
			\draw[edge, bend left=20, dotted]				(t1)edge(l2);
			\draw[edge, bend right=20, dotted]				(t1)edge(l3);
			\draw[edge, bend left=20, densely dashdotted]	(t1)edge(l3);
			\draw[edge, densely dashdotted]				(t1)--(l4);
			
                	\node[draw = none, fill = none, below=1mm of l1]  	(a) {\large $a$};
                	\node[draw = none, fill = none, below=1mm of l2]  	(b) {\large $b$};
                	\node[draw = none, fill = none, below=1mm of l3]	(c) {\large $c$};
                	\node[draw = none, fill = none, below=1mm of l4]	(c) {\large $d$};
			
	 	\end{tikzpicture}
 	\end{subfigure}
	\begin{subfigure}[b]{.5\textwidth}
		\centering
		\begin{tikzpicture}[every node/.style={draw, circle, fill, inner sep=0pt},
		square/.style={regular polygon, regular polygon sides=4}]
	       	\tikzset{edge/.style={very thick}}
	       	
	       	\node[] (root) at (0,5)	{a};
	       	
	       	\node[below = 0.5 of root] 			(t1)	{a};
			\node[below left = 1.5 and 3 of t1] 	(l1) {a};
			\node[below left = 1.5 and 1 of t1] 	(l2) {a};
			\node[below right = 1.5 and 1 of t1] 	(l3) {a};
			\node[below right= 1.5 and 3 of t1] 	(l4) {a};
			
			\draw[edge]			(root) -- (t1);

			\draw[edge, dashed]			(t1)--(l1);
			\draw[edge, dashed]			(t1)--(l2);
			\draw[edge, dotted]			(t1)--(l3);
			\draw[edge, dotted]			(t1)--(l4);
			
                	\node[draw = none, fill = none, below=1mm of l1]  	(a) {\large $a$};
                	\node[draw = none, fill = none, below=1mm of l2]  	(b) {\large $b$};
                	\node[draw = none, fill = none, below=1mm of l3]	(c) {\large $c$};
                	\node[draw = none, fill = none, below=1mm of l4]	(c) {\large $d$};
			
	 	\end{tikzpicture}
 	\end{subfigure}
    \caption{Cherry covers of sizes~$3$ (left) and~$2$ (right) for the same tree.
    We duplicate the edges incident to~$b$ and~$c$ to show how an edge can be covered more than once in a cherry cover.
    The cherry cover of the left tree reflects the cherry cover used in the proof of Lemma~\ref{lem:OnlyCherryCoverThenTree}.}
    \label{fig:CoverofMultifurcations}
\end{figure}

\begin{lem}\label{lem:OnlyCherryCoverThenTree}
    Let $N$ be a network on~$n$ leaves. Then $B(N) = N$ has a cherry cover using only cherry shapes if and only if $N$ is a tree.
    Furthermore, if~$N$ is a tree, then there exists a cherry cover of~$N$ that contains exactly~$n-1$ cherry shapes.
\end{lem}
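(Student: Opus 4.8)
The plan is to prove the equivalence by a short structural argument in one direction and an explicit construction in the other, and then to read off the count from the construction.

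For the ``only if'' direction, suppose $B(N)$ has a cherry cover $P$ in which every shape is a cherry shape, and assume towards a contradiction that $N$ contains a reticulation $r$. Since $B(N)=N$ (equivalently, since every reticulation of $N$ has indegree exactly two), $r$ has a single outgoing edge $rw$ in $B(N)$, where $w$ is the unique child of $r$ in $N$. By Definition~\ref{def:CherryCover} this edge is covered by some shape of $P$, which by hypothesis is a cherry shape $\{pa,pb\}$ with $a\neq b$. Because $rw$ is directed from $r$ to $w$, and both edges of a cherry shape emanate from its internal vertex, we are forced to have $p=r$; but then $r$ has two distinct children $a$ and $b$, contradicting that a reticulation has outdegree $1$. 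Hence $N$ has no reticulation, i.e. $N$ is a tree (and then indeed $B(N)=N$). The only thing to be a little careful about here is the (trivial) observation that $r$ must be the internal vertex of the covering cherry shape, since its out-edge cannot be incident to $r$ as a \emph{head}.

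For the ``if'' direction, assume $N$ is a tree, so $B(N)=N$. If $n=1$ the only edge is the root edge, the empty set is a cherry cover, and $|\emptyset|=0=n-1$, so assume $n\geq 2$. Then every non-root, non-leaf vertex $v$ of $N$ is a tree vertex and therefore has outdegree $d_v\geq 2$; I would fix an arbitrary ordering $c^v_1,\dots,c^v_{d_v}$ of its children and let $P$ contain, for each such $v$, the $d_v-1$ cherry shapes $\{vc^v_i,vc^v_{i+1}\}$ for $1\le i\le d_v-1$ (each a genuine cherry shape on the three distinct vertices $c^v_i,c^v_{i+1},v$). Then I would check that $P$ is a cherry cover of $B(N)=N$ using only cherry shapes: every non-root edge has the form $vc$ with $v$ a non-root, non-leaf vertex, hence lies in at least one of the shapes $\{vc^v_i,vc^v_{i+1}\}$, so the first clause of Definition~\ref{def:CherryCover} holds; the second and third clauses are vacuous since a tree has no reticulation and hence no reticulated cherry shape and no middle edge. (This is exactly the cover drawn on the left in Figure~\ref{fig:CoverofMultifurcations}.)

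It then remains to count $|P|$. We have $|P|=\sum_v (d_v-1)$, summed over non-root, non-leaf vertices $v$. Writing $V$ and $E$ for the numbers of vertices and edges of the tree $N$, the tails of the $E-1$ non-root edges are precisely the non-root, non-leaf vertices (leaves have no outgoing edges), so $\sum_v d_v=E-1$, while the number of such $v$ equals $V-1-n$; since $N$ is a tree, $E=V-1$, giving $|P|=(E-1)-(V-1-n)=n-1$. I expect the construction and the forward argument to be entirely routine; the only place demanding genuine care is the bookkeeping in this count, namely keeping track of the correct exclusions (the root edge, and the leaves contributing no outgoing edges) so that the sum collapses exactly to $n-1$.
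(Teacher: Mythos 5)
Your proof is correct and follows essentially the same route as the paper: the cover built from consecutive pairs of children at each internal vertex is exactly the paper's construction, and your edge/vertex count via $E=V-1$ is just a repackaging of the paper's handshaking argument yielding $\sum_t(\mathrm{outdeg}(t)-1)=n-1$. The only difference is that you spell out the ``only if'' direction (forcing the reticulation to be the internal vertex of a covering cherry shape), which the paper dismisses as immediate from the definition; that is a fine, if slightly more explicit, rendering of the same observation.
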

\begin{proof}
	The first statement follows from the definition of a cherry cover.
	To show the second statement, we construct a cherry cover for~$N$ as follows.
	Let~$t$ be a tree vertex in~$N$ of outdegree-$d$.
	Arbitrarily enumerate the~$d$ outgoing edges of~$t$ by~$e_1,e_2,\ldots, e_d$, and define cherry shapes~$C_{t_i} = \{e_i,e_{i+1}\}$ for~$i\in[d-1] = \{1,\ldots, d-1\}$.
	These~$d-1$ cherry shapes cover all outgoing edges of~$t$.
	We repeat this for all tree vertices, and since the tail of every edge, except for the root edge, is a tree vertex, we obtain a cherry cover.
	
	Let~$T(N)$ denote the tree vertices of~$N$.
	Since the sum of the indegrees is equal to the sum of the outdegrees, we get that
	\[n + |T(N)| = \sum_{v\in N}indeg(v) = \sum_{v\in N}outdeg(v) = 1 + \sum_{t\in T(N)}outdeg(t).\]
	Rearranging this equation, we find
	\[\sum_{t\in T(N)}outdeg(t)-|T(N)| = n-1.\]
	Since each tree vertex~$t$ is responsible for crafting~$outdeg(t)-1$ cherry shapes, we have that the size of the cherry cover is exactly~$\sum_{t\in T(N)}outdeg(t) - |T(N)| = n-1$, which is what we wanted.
\end{proof}

\begin{definition}\label{def:CherryShapeOrder}
    Let $P$ be a cherry cover of some network. A shape $A\in P$ is \emph{directly above} another shape $B\in P$ if an internal vertex of $B$ is an endpoint of $A$. A shape $A\in P$ is \emph{above} a shape $B\in P$ if there is a sequence $A=A_0,\ldots,A_n=B$ such that $A_{i-1}$ is directly above $A_i$ for all $i\in[n]$. 
    The cherry cover $P$ is called \emph{acyclic} if no shape is above itself.
\end{definition}

Given a cherry cover of some network, Definition~\ref{def:CherryShapeOrder} naturally gives rise to an auxiliary graph where the cherry shapes and reticulated cherry shapes are the vertices and an edge is drawn from one shape to another if it is directly above the shape.
It can be used to determine the acyclicity of a cherry cover.
An example of such a graph can be seen in Figure~\ref{subfig:AuxGraph}.

\subsection{Reducing shapes}

Let~$N$ be a network and let~$B(N)$ be the bulged version of~$N$.
We now define the action of \emph{reducing} cherry shapes and reticulated cherry shapes in~$N$, when the two endpoints of these shapes are leaves.
Let~$A$ be a cherry shape or a reticulated cherry shape on leaf endpoints~$x,y$ in~$B(N)$.
Then~$B(N)\setminus A$ is the graph obtained by deleting one edge for every element of~$A$ from~$B(N)$, deleting all isolated vertices, and labelling all unlabelled outdegree-$0$ vertices by the label of one of their children in~$N$.
In the case that~$A$ is a cherry shape and that the parent of~$y$ is a bifurcation, then we have the option of labelling the outdegree-$0$ vertex as either~$x$ or~$y$; when we reduce cherry shapes, we generally reduce it as an ordered pair---in this case~$(x,y)$---in which case we label the outdegree-$0$ vertex as~$y$.

If~$p_y$, the parent of~$y$ in~$N$ is a multifurcation, then observe that the vertex~$p_y$ becomes a vertex of indegree-$1$ and outdegree at least~$1$ in~$B(N)\setminus A$.
In this case, we let~$(B(N)\setminus A)\cup \{p_yy\}$ denote the graph obtained by adding a vertex with the label~$y$ and adding an edge~$\{p_yy\}$.
By making sure to add this leaf and edge to~$B(N)\setminus A$ whenever the parent of~$y$ is a multifurcation, we ensure that the resulting graph is always a bulged version of some network.

\begin{definition}
Let~$N$ be a network and let~$A$ be a cherry shape or a reticulated cherry shape whose endpoints are leaves~$x,y$ in~$B(N)$.
Let~$p_y$ denote the parent of~$y$ in~$N$.
Then \emph{reducing}~$A$ from~$N$ is the action of obtaining the graph
\begin{itemize}
	\item $B(N)\setminus A$ if~$p_y$ is a bifurcation;
	\item $(B(N)\setminus A)\cup \{p_yy\}$ if~$p_y$ is a multifurcation.
\end{itemize}
\end{definition}



\subsection{Network Classes}

\paragraph{Tree-based networks} 
We use the definition of \emph{non-binary tree-based networks} from Jetten and van Iersel~\cite{jetten2018nonbinary}.
Note that in their paper, they define two variants of tree-basedness of non-binary networks: one called ``tree-based'' and the other ``strictly tree-based''.
Here, we focus on the former definition.
\begin{definition}
    A network~$N$ is \emph{tree-based} with \emph{base tree}~$T$ when~$N$ can be obtained from~$T$ via the following steps:
    \begin{enumerate}
        \item Replace some edges of~$T$ by paths, whose internal vertices are called \emph{attachment points}. Attachment points have indegree-$1$ and outdegree-$1$.
        \item Add arcs, called \emph{linking arcs}, between pairs of attachment points and from tree vertices to attachments points, so that~$N$ remains acyclic, attachment points have indegree or outdegree~$1$, and~$N$ has no parallel arcs. 
        \item Suppress every attachment point that is not incident to a linking arc.
    \end{enumerate}
\end{definition}
See Figure~\ref{fig:TBCherryCover} for an example of a tree-based network, its bulged version, and a cherry cover for the network.

Given a tree-based network~$N$, we may reverse the above actions by deleting enough reticulation edges---call this set of edges~$E_r$---and suppressing all indegree-1 outdegree-1 vertices to obtain a base tree~$T$ (note that~$E_r$ may not necessarily be unique).
Letting~$V(N)$ and~$E(N)$ denote the vertices and the edges of~$N$ respectively, we define the \emph{embedding} of~$T$ in~$N$ by the subgraph of~$N$ with vertex set~$V(N)$ and edge set~$E(N) \setminus E_r$.
Observe that suppressing all indegree-1 outdegree-1 vertices from the embedding of~$T$ in~$N$ returns the tree~$T$.

Let~$N$ be a network on~$X$.
We say that the bulged version of~$N$,~$B(N)$, is \emph{tree-based} if the leaves of some spanning tree of~$B(N)$ are labelled bijectively by~$X$.
Because a spanning tree of~$B(N)$ contains exactly one edge from each set of parallel edges, we come to the following observation.
\begin{obs}\label{obs:BulgeOfTBisTB}
    A network~$N$ is tree-based if and only if~$B(N)$ is tree-based.
\end{obs}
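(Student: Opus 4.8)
The plan is to prove both implications simultaneously by reducing ``tree-based'' on each side to the existence of a spanning subtree whose outdegree-$0$ vertices are exactly the vertices of $X$, and then matching up such subtrees of $N$ with those of $B(N)$. The reformulation I would rely on is: a network, or the bulged version of a network, is tree-based if and only if it has a spanning subtree in which the set of outdegree-$0$ vertices is exactly $X$ (equivalently, whose leaves are labelled bijectively by $X$). For $B(N)$ this is the definition stated just before the observation, reading a leaf of a spanning tree as a vertex of outdegree $0$. For $N$ it is what the paragraph on embeddings provides: the embedding of a base tree is such a spanning subtree, and conversely, from a spanning subtree of $N$ with this property one recovers a base tree by suppressing its indegree-$1$ outdegree-$1$ vertices, with $N$ obtained back by re-adding the deleted reticulation edges. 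So it suffices to show that $N$ has such a spanning subtree if and only if $B(N)$ does.

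The key step is to pin down how a spanning tree of $B(N)$ meets the parallel classes. I claim that if $S'$ is a spanning tree of $B(N)$ whose outdegree-$0$ vertices are exactly $X$, then $S'$ contains exactly one edge from each parallel class. It contains at most one, since two parallel edges form a cycle while $S'$ is acyclic. It contains at least one: a parallel class consists of the $k-1\ge 1$ copies of the unique outgoing edge of some reticulation $v$ of indegree $k$, so if $S'$ omitted all of them then $v$ would have outdegree $0$ in $S'$; but $v$ is a reticulation of $N$, hence $v\notin X$, contradicting that the outdegree-$0$ vertices of $S'$ are exactly $X$.

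Given this, the correspondence is routine. Send a spanning tree $S'$ of $B(N)$ to the subgraph $\psi(S')$ of $N$ obtained by replacing each chosen copy of a bulged edge by the original edge of $N$ (every remaining edge of $S'$ is already an edge of $N$) and leaving it unchanged; since $S'$ uses exactly one copy from each parallel class, no two edges of $S'$ are identified, so $\psi(S')$ is isomorphic to $S'$ as an undirected graph, hence is again a spanning tree, with the same outdegree at each vertex and therefore the same set of outdegree-$0$ vertices, namely $X$. Conversely, a spanning subtree of $N$ whose outdegree-$0$ vertices are $X$ has every reticulation at outdegree exactly $1$ (at most $1$ in $N$; at least $1$, since otherwise it would be an outdegree-$0$ vertex not in $X$), so replacing each reticulation's outgoing edge by one of its $\ge 1$ parallel copies yields a spanning tree of $B(N)$ with the same outdegree-$0$ vertices. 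Hence $N$ has a spanning subtree with leaf set $X$ precisely when $B(N)$ does, and with the reformulation this gives the observation.

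The step I expect to be the main obstacle is the reverse direction of the reformulation for $N$: that merely possessing a spanning subtree with the right outdegree-$0$ set already makes $N$ tree-based. Such a subtree need not be a rooted arborescence --- it may contain a non-root vertex of indegree $0$, and then suppressing its indegree-$1$ outdegree-$1$ vertices does not directly produce a phylogenetic tree. I would handle this either by invoking the known equivalences for (non-)binary tree-based networks from the literature, or by a small repair lemma: one can reroute around any indegree-$0$ non-root vertex without changing the set of outdegree-$0$ vertices, eventually reaching a spanning arborescence rooted at the root, which is exactly the embedding of a base tree. Everything else --- the parallel-class count, the two maps, and the forward direction --- is straightforward.
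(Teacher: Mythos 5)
Your overall strategy (reduce both sides to the existence of a suitable spanning tree and match spanning trees of $N$ with those of $B(N)$ via the fact that a spanning tree uses exactly one edge from each parallel class) is exactly the paper's justification, which it states in one sentence before the observation. The problem is the reformulation you rely on: it is not true that $N$ is tree-based as soon as it has a spanning subtree whose outdegree-$0$ vertices are exactly $X$. The paper's own Figure~\ref{fig:JettenLeo}(b) is a counterexample. Writing $\rho\to t_1$; $t_1\to t_2,t_5$; $t_2\to t_3,t_4$; $t_3,t_4,t_5\to r_1,r_2$; $r_1\to a$, $r_2\to b$ for that network $N^s$, the edge set $\{\rho t_1,\,t_1t_2,\,t_1t_5,\,t_2t_3,\,t_3r_1,\,t_4r_1,\,t_5r_2,\,r_1a,\,r_2b\}$ is a spanning tree (in the undirected sense) whose outdegree-$0$ vertices are exactly $a$ and $b$, yet $N^s$ is not tree-based: each of $t_3,t_4,t_5$ must send an arc into $\{r_1,r_2\}$ in any embedding of a base tree, but $r_1,r_2$ can absorb only one incoming arc each. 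The same example kills your fallback ``repair lemma'': the tree above has $t_4$ at indegree $0$ and $r_1$ at indegree $2$, and every attempt to reroute either creates a new unlabelled outdegree-$0$ vertex or just moves the indegree-$0$ vertex from $t_4$ to $t_3$, so no sequence of reroutings reaches a spanning arborescence --- nor can it, since none exists. So the ``main obstacle'' you flagged is not a technicality to be patched; the statement you planned to patch it with is false.

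The fix is to work throughout with \emph{rooted} spanning trees (every non-root vertex has indegree exactly~$1$, i.e.\ spanning arborescences) whose leaves are exactly $X$; this is what the paper's definition of tree-basedness for $B(N)$ and the embedding of a base tree actually give, and it is the form in which the equivalence with tree-basedness is known. With that reading, the rest of your argument is fine and is essentially the paper's: a spanning arborescence of $B(N)$ with leaf set $X$ contains exactly one edge of each parallel class (at most one by acyclicity, at least one since otherwise the reticulation would become an unlabelled leaf), and your replacement map $\psi$ and its inverse preserve in- and outdegrees, hence carry rooted spanning trees with leaf set $X$ of one graph to the other. Note that your counting step should be checked against indegrees as well as outdegrees once arborescences are required, but this is immediate since $\psi$ is an isomorphism onto its image.
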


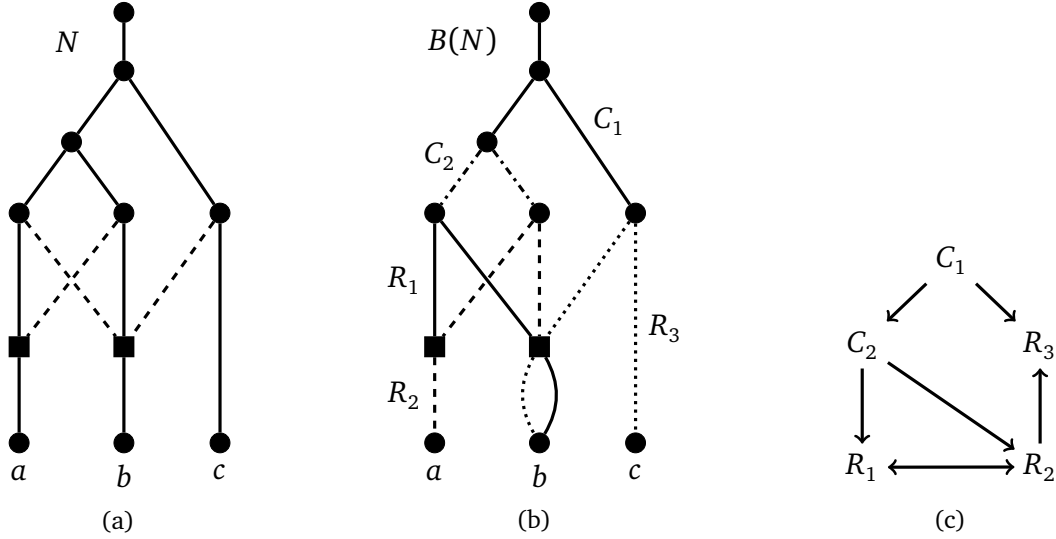
\begin{figure}
	\begin{subfigure}[b]{.33\textwidth}
		\centering
		\begin{tikzpicture}[every node/.style={draw, circle, fill, inner sep=0pt},
		square/.style={regular polygon, regular polygon sides=4}]
	       	\tikzset{edge/.style={very thick}}
	       	
	       		\node[] (root) at (0,5)	{a};
	       	
	       		\node[below = 0.5 of root] 				(t1)	{a};
			\node[below left = 0.75 and 0.5 of t1]		(t2)	{a};
			\node[below left = 0.75 and 0.5 of t2]		(t3)	{a};
			\node[below right = 0.75 and 0.5 of t2]	(t4)	{a};
			\node[right = 1 of t4]					(t5)	{a};
			
			\node[square, below = 1.5 of t3]	(r1)	{a};
			\node[square, below = 1.5 of t4]	(r2)	{a};

			\node[below = 1 of r1]		(l1)	{a};
			\node[below = 1 of r2]		(l2)	{a};
			\node[right = 1 of l2]			(l3)	{a};
			
			\draw[edge]			(root) -- (t1)  node[draw = none, fill = none, left= 5mm, midway]{\large $N$};
			\draw[edge]			(t1)--(t2);
			\draw[edge]			(t1)--(t5);
			\draw[edge]			(t2)--(t3);
			\draw[edge]			(t2)--(t4);
			
			\draw[edge]			(t3)--(r1);
			\draw[edge]			(t4)--(r2);
			
			\draw[edge, dashed]		(t3)--(r2);
			\draw[edge, dashed]		(t4)--(r1);
			\draw[edge, dashed]		(t5)--(r2);
			
			\draw[edge]			(r1)--(l1);
			\draw[edge]			(r2)--(l2);
			\draw[edge]			(t5)--(l3);
			
                		\node[draw = none, fill = none, below=1mm of l1]  	(a) {\large $a$};
                		\node[draw = none, fill = none, below=1mm of l2]  	(b) {\large $b$};
                		\node[draw = none, fill = none, below=1mm of l3]	(c) {\large $c$};
			
	 	\end{tikzpicture}
	 	\caption{}
 	\end{subfigure}
 	\begin{subfigure}[b]{.33\textwidth}
		\centering
		\begin{tikzpicture}[every node/.style={draw, circle, fill, inner sep=0pt},
		square/.style={regular polygon, regular polygon sides=4}]
	       	\tikzset{edge/.style={very thick}}
	       	
	       		\node[] (root) at (0,5)	{a};
	       	
	       		\node[below = 0.5 of root] 				(t1)	{a};
			\node[below left = 0.75 and 0.5 of t1]		(t2)	{a};
			\node[below left = 0.75 and 0.5 of t2]		(t3)	{a};
			\node[below right = 0.75 and 0.5 of t2]	(t4)	{a};
			\node[right = 1 of t4]					(t5)	{a};
			
			\node[square, below = 1.5 of t3]	(r1)	{a};
			\node[square, below = 1.5 of t4]	(r2)	{a};

			\node[below = 1 of r1]	(l1)	{a};
			\node[below = 1 of r2]	(l2)	{a};
			\node[right = 1 of l2]		(l3)	{a};
			
			\draw[edge]				(root) -- (t1) node[draw = none, fill = none, left= 5mm, midway]{\large $B(N)$};
			\draw[edge]				(t1)--(t2);
			\draw[edge]				(t1)--(t5) node[draw = none, fill = none, above right= 1mm, midway]{\large $C_1$};
			\draw[edge, dashdotted]			(t2)--(t3) node[draw = none, fill = none, above left = 1mm, midway]{\large $C_2$};
			\draw[edge, dashdotted]			(t2)--(t4);
			
			\draw[edge]				(t3)--(r1) node[draw = none, fill = none, left = 1mm, midway]{\large $R_1$};
			\draw[edge, dashed]			(t4)--(r2);
			
			\draw[edge]				(t3)--(r2);
			\draw[edge, dashed]			(t4)--(r1);
			\draw[edge, dotted]			(t5)--(r2);
			
			\draw[edge, dashed]			(r1)--(l1) node[draw = none, fill = none, left = 1mm, midway]{\large $R_2$};
			\draw[edge, bend right, dotted]	(r2) edge (l2);
			\draw[edge, bend left]		(r2) edge (l2);
			\draw[edge, dotted]			(t5)--(l3) node[draw = none, fill = none, right = 1mm, midway]{\large $R_3$};
			
                		\node[draw = none, fill = none, below=1mm of l1]  	(a) {\large $a$};
                		\node[draw = none, fill = none, below=1mm of l2]  	(b) {\large $b$};
                		\node[draw = none, fill = none, below=1mm of l3]	(c) {\large $c$};

	 	\end{tikzpicture}
	 	\caption{}
 	\end{subfigure}
 		\begin{subfigure}[b]{.33\textwidth}
		\centering
		\begin{tikzpicture}[edge/.style={very thick, ->}]
	       	
	       		\node[] (C1)	at (0,-5)	{\large $C_1$};
	       	
		       	\node[below left = 0.5 and 0.5 of C1] 	(C2) {\large $C_2$};
		       	\node[below right= 0.5 and 0.5 of C1] 	(R3) {\large $R_3$};
	       		\node[below=1 of C2] 		(R1)	{\large $R_1$};
	       		\node[below=1 of R3]		(R2) {\large $R_2$};
			
			\draw[edge] (C1)--(C2);
			\draw[edge] (C1)--(R3);
			\draw[edge] (C2)--(R1);
			\draw[edge] (C2)--(R2);
			\draw[edge] (R2)--(R3);
			\draw[edge, <->] (R1)--(R2);
			
	 	\end{tikzpicture}
	 	\caption{}
	 	\label{subfig:AuxGraph}
 	\end{subfigure}
    \caption{(a) A semi-binary tree-based network~$N$ that is not orchard on taxa set~$\{a,b,c\}$. 
    A base tree of~$N$ is indicated by the solid edges.
    (b) The bulged version of~$N$ with one possible cherry cover~$\{C_1,C_2,R_1,R_2,R_3\}$.
    Each cherry shape is indicated by the line type used for the edges.
    (c) An auxiliary graph that shows the order on the cherry shapes.
    An edge is drawn from one cherry shape to another if it is directly above it.
    In this case, the cherry cover is not acyclic since~$\{R_1,R_2\}$ form a cycle.}
    \label{fig:TBCherryCover}
\end{figure}

\paragraph{Orchard networks}
An ordered pair of leaves~$(x,y)$ in a network~$N$ is a \emph{cherry} of~$N$ if~$N$ has a cherry shape whose endpoints are~$x$ and~$y$.
Similarly,~$(x,y)$ is a \emph{reticulated cherry} of~$N$ if~$N$ has a reticulated cherry shape whose endpoints are~$x$ and~$y$ and the parent of~$x$ is a reticulation.
We call~$(x,y)$ a \emph{reducible pair} if it is a cherry or a reticulated cherry.
Given an ordered pair of leaves~$(x,y)$, we \emph{reduce}~$(x,y)$ from a graph~$N$ by 
\begin{itemize}
	\item reducing the corresponding cherry shape from~$N$ if~$(x,y)$ is a cherry in~$N$ (if the parent of~$y$ is a bifurcation, then upon deleting the edges of the cherry shape from~$B(N)$, label the unlabelled outdegree-0 vertex as~$y$);
	\item reducing the corresponding reticulated cherry shape from~$N$ if~$(x,y)$ is a reticulated cherry in~$N$;
	\item doing nothing otherwise.
\end{itemize}
Upon reducing~$(x,y)$ from~$N$, we denote the obtained subgraph by~$N(x,y)$.
For a sequence of ordered pairs~$S$, we denote by~$NS$ the network obtained by successively reducing pairs of~$S$ from~$N$ in order.
Observe that this definition of reducing an ordered pair of leaves is equivalent to the definition presented by Janssen and Murakami~\cite{janssen2018solving}.
\begin{definition}
    A network~$N$ is \emph{orchard} if there exists a sequence of ordered pairs~$S$ such that~$NS$ is a network on a single leaf.
\end{definition}
In other words, a network is orchard if we may successively reduce a cherry or a reticulated cherry to obtain a network on a single leaf.
See Figure~\ref{fig:OrchardCherryCover} for an example of an orchard network, its bulged version, and its acyclic cherry cover.

\begin{obs}\label{obs:ReduceBulgedGeneral}
Let~$(x,y)$ be a reducible pair in~$N$, and let~$p_y$ denote the parent of~$y$ in~$N$.
Let~$A$ denote the cherry shape or the reticulated cherry shape corresponding to the reducible pair~$(x,y)$.
\begin{itemize}
	\item If~$p_y$ is a bifurcation, then~$N(x,y) = B^{-1}(B(N)\setminus A)$;
	\item If~$p_y$ is a multifurcation, then~$N(x,y) = B^{-1}((B(N)\setminus A) \cup \{p_yy\})$.
\end{itemize}
\end{obs}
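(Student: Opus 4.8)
The plan is to unwind the definitions. By the definition of reducing a reducible pair, the graph obtained from $N$ by reducing $(x,y)$ is the graph obtained from $N$ by reducing the shape $A$ associated with $(x,y)$ --- a cherry shape if $(x,y)$ is a cherry, a reticulated cherry shape if it is a reticulated cherry --- and by the definition of reducing a shape this graph is $B(N)\setminus A$ when $p_y$ is a bifurcation and $(B(N)\setminus A)\cup\{p_yy\}$ when $p_y$ is a multifurcation. So the content of the observation is that this graph is a bulged version of a network, which is $N(x,y)$; the two displayed equalities then simply say that $N(x,y)$ is recovered by applying $B^{-1}$ to it, which is the defining property of $B^{-1}$.

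First I would verify that $B(N)\setminus A$, respectively $(B(N)\setminus A)\cup\{p_yy\}$, really is a bulged version of a network, so that $B^{-1}$ is applicable; this is already asserted in the discussion preceding the definition of reducing a shape, and the construction was arranged precisely so that it holds. Concretely one checks that no vertex ends up with indegree $1$ and outdegree $1$. In the cherry case the common parent $p=p_y$ either loses both its edges to $x$ and $y$ and becomes an outdegree-$0$ vertex that is relabelled by a leaf label (when $p$ is a bifurcation), or retains outdegree at least $2$ once $\{p_yy\}$ has been re-added (when $p$ is a multifurcation). In the reticulated cherry case the same dichotomy applies to $p_y$, while the internal reticulation $p_x$ either becomes an outdegree-$0$ vertex that is relabelled (when its indegree in $N$ is $2$) or stays a reticulation of smaller indegree, whose remaining parallel outgoing edges are merged back into a single edge by $B^{-1}$.

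With $B^{-1}$ available, the remaining work is a routine case check --- over $(x,y)$ being a cherry or a reticulated cherry and $p_y$ being a bifurcation or a multifurcation --- that the edge deletions, isolated-vertex deletions, relabellings, and (in the multifurcation case) the re-addition of $\{p_yy\}$ on $B(N)$ produce exactly the bulged version of the network obtained by the corresponding operation performed directly on $N$, i.e.\ the Janssen--Murakami reduction of $(x,y)$ (delete a leaf and suppress degree-$2$ vertices for a cherry; delete a reticulation edge and suppress degree-$2$ vertices for a reticulated cherry), which the paper has already noted is equivalent to its own reduction. The one point I expect to require care is the bookkeeping around a high-indegree internal reticulation: when $p_x$ has indegree $k\ge 3$ in $N$, the edge $p_xx$ of $A$ is only one of the $k-1$ parallel copies of $p_x$'s outgoing edge in $B(N)$, so deleting it must be seen to correspond, after applying $B^{-1}$, to leaving a reticulation of indegree $k-1$ in $N$ rather than suppressing anything --- exactly the effect of the reticulated cherry reduction. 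Everything else follows immediately from the definitions.
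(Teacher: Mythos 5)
Your proposal is correct and takes the same route the paper does: the paper states this as an Observation with no written proof, treating it as an immediate unwinding of the definitions of shape reduction, pair reduction, and $B^{-1}$, which is exactly your plan. The additional bookkeeping you supply (checking the reduced graph is a bulged version of a network, the relabelling of $p_y$ or $p_x$ when it becomes an outdegree-$0$ vertex, and the indegree-$k\ge 3$ reticulation keeping $k-2$ parallel edges so that $B^{-1}$ leaves a reticulation of indegree $k-1$) is precisely the verification the paper leaves implicit, and it is carried out correctly.
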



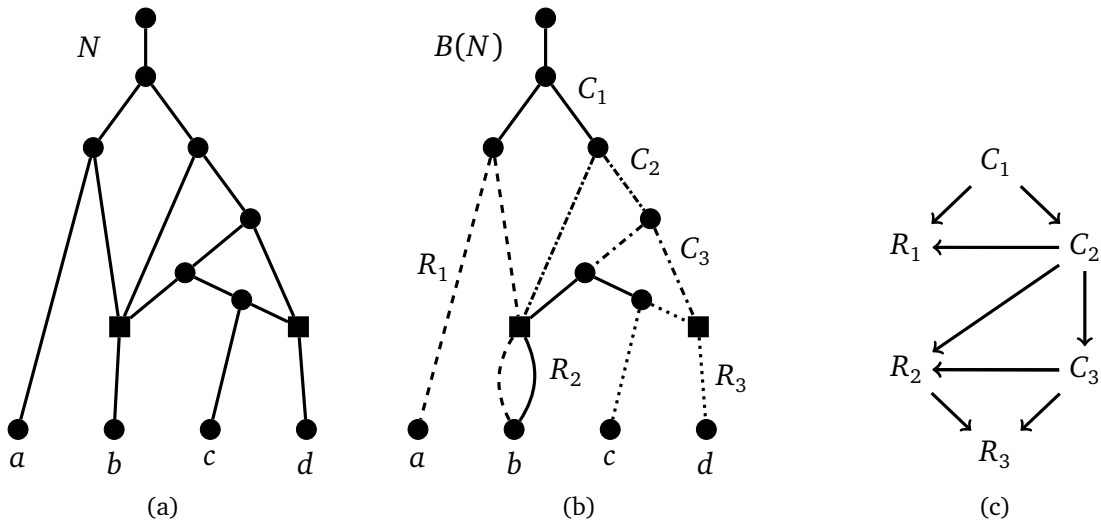
\begin{figure}
	\begin{subfigure}[b]{.33\textwidth}
		\centering
		\begin{tikzpicture}[every node/.style={draw, circle, fill, inner sep=0pt},
		square/.style={regular polygon, regular polygon sides=4}]
	       	\tikzset{edge/.style={very thick}}
	       	
	       		\node[] (root) at (0,5)	{a};
	       	
	       		\node[below = 0.5 of root] 				(t1)	{a};
			\node[below left = 0.75 and 0.5 of t1]		(t2)	{a};
			\node[below right = 0.75 and 0.5 of t1]	(t3)	{a};
			\node[below right = 0.75 and 0.5 of t3]	(t4)	{a};
			
			
			\node[square, below left = 1.2 and 1.5 of t4]	(r1)	{a};
			\node[square, below right = 1.2 and 0.4 of t4]	(r2)	{a};
			
			\node[] (t5) at ($(t4)!0.5!(r1)$) {a};
			\node[] (t6) at ($(t5)!0.5!(r2)$) {a};

			\node[below left = 4.5 and 1.5 of t1]	(l1)	{a};
			\node[right = 1 of l1]				(l2)	{a};
			\node[right = 1 of l2]				(l3)	{a};
			\node[right = 1 of l3]				(l4)	{a};
			
			\draw[edge]		(root) -- (t1) node[draw = none, fill = none, left= 5mm, midway]{\large $N$};
			\draw[edge]		(t1)--(t2);
			\draw[edge]		(t1)--(t3);
			\draw[edge]		(t2)--(r1);
			\draw[edge]		(t3)--(r1);
			\draw[edge]		(t3)--(t4);
			\draw[edge]		(t4)--(t5);
			\draw[edge]		(t4)--(r2);
			\draw[edge]		(t5)--(r1);
			\draw[edge]		(t5)--(t6);
			\draw[edge]		(t6)--(r2);
			
			\draw[edge]		(t2)--(l1);
			\draw[edge]		(r1)--(l2);
			\draw[edge]		(t6)--(l3);
			\draw[edge]		(r2)--(l4);
			
                		\node[draw = none, fill = none, below=1mm of l1]  	(a) {\large $a$};
                		\node[draw = none, fill = none, below=1mm of l2]  	(b) {\large $b$};
	                	\node[draw = none, fill = none, below=1mm of l3]	(c) {\large $c$};
                		\node[draw = none, fill = none, below=1mm of l4]	(d) {\large $d$};
	 	\end{tikzpicture}
	 	\caption{}
 	\end{subfigure}
	\begin{subfigure}[b]{.33\textwidth}
		\centering
		\begin{tikzpicture}[every node/.style={draw, circle, fill, inner sep=0pt},
		square/.style={regular polygon, regular polygon sides=4}]
	       	\tikzset{edge/.style={very thick}}
	       	
	       		\node[] (root) at (0,5)	{a};
	       	
	       		\node[below = 0.5 of root] 				(t1)	{a};
			\node[below left = 0.75 and 0.5 of t1]		(t2)	{a};
			\node[below right = 0.75 and 0.5 of t1]	(t3)	{a};
			\node[below right = 0.75 and 0.5 of t3]	(t4)	{a};
			
			
			\node[square, below left = 1.2 and 1.5 of t4]	(r1)	{a};
			\node[square, below right = 1.2 and 0.4 of t4]	(r2)	{a};
			
			\node[] (t5) at ($(t4)!0.5!(r1)$) {a};
			\node[] (t6) at ($(t5)!0.5!(r2)$) {a};

			\node[below left = 4.5 and 1.5 of t1]	(l1)	{a};
			\node[right = 1 of l1]			(l2)	{a};
			\node[right = 1 of l2]			(l3)	{a};
			\node[right = 1 of l3]			(l4)	{a};
			
			\draw[edge]					(root) -- (t1) node[draw = none, fill = none, left= 5mm, midway]{\large $B(N)$};
			\draw[edge]					(t1)--(t2);
			\draw[edge]					(t1)--(t3) node[draw = none, fill = none, above right = 1mm, midway]{\large $C_1$};
			\draw[edge, dashed]				(t2)--(r1);
			\draw[edge, densely dashdotted]	(t3)--(r1);
			\draw[edge, densely dashdotted]	(t3)--(t4) node[draw = none, above right = 1mm, fill = none, midway]{\large $C_2$};
			\draw[edge, dashdotted]				(t4)--(t5);
			\draw[edge, dashdotted]				(t4)--(r2) node[draw = none, fill = none, above right = 1mm, midway]{\large $C_3$};
			\draw[edge]					(t5)--(r1);
			\draw[edge]					(t5)--(t6);
			\draw[edge, dotted]				(t6)--(r2);
			
			\draw[edge, dashed]				(t2)--(l1) node[draw = none, fill = none, above left = 1mm, midway]{\large $R_1$};
			\draw[edge, bend left]			(r1)edge(l2) node[draw = none, fill = none, above right = 1mm, midway]{\large $R_2$};
			\draw[edge, bend right, dashed]	(r1)edge(l2);
			\draw[edge, dotted]				(t6)--(l3);
			\draw[edge, dotted]				(r2)--(l4) node[draw = none, fill = none, right = 1mm, midway]{\large $R_3$};
			
                		\node[draw = none, fill = none, below=1mm of l1]  	(a) {\large $a$};
                		\node[draw = none, fill = none, below=1mm of l2]  	(b) {\large $b$};
	                	\node[draw = none, fill = none, below=1mm of l3]	(c) {\large $c$};
                		\node[draw = none, fill = none, below=1mm of l4]	(d) {\large $d$};
	 	\end{tikzpicture}
	 	\caption{}
 	\end{subfigure}
	\begin{subfigure}[b]{.33\textwidth}
		\centering
		\begin{tikzpicture}[edge/.style={very thick, ->}]
			
	       		\node[] (C1)	at (0,-5)	{\large $C_1$};
	       	
		       	\node[below left = 0.5 and 0.5 of C1] 	(R1) {\large $R_1$};
		       	\node[below right= 0.5 and 0.5 of C1] 	(C2) {\large $C_2$};
	       		\node[below =1 of R1] 				(R2)	{\large $R_2$};
	       		\node[below =1 of C2]				(C3) {\large $C_3$};
			\node[below right = 0.5 and 0.5 of R2] 	(R3) {\large $R_3$};
			
			\draw[edge] (C1)--(R1);
			\draw[edge] (C1)--(C2);
			\draw[edge] (C2)--(R1);

			\draw[edge] (C2)--(R2);
			\draw[edge] (C2)--(C3);
			\draw[edge] (R2)--(R3);
			\draw[edge] (C3)--(R2);
			\draw[edge] (C3)--(R3);
			\draw[edge] (C3)--(R2);
			
	 	\end{tikzpicture}
		\caption{}
	\end{subfigure}

	\caption{(a) A semi-binary orchard network~$N$ on taxa set~$\{a,b,c,d\}$.
    	One sequence for reducing~$N$ to a network on a single leaf is~$(d,c),(b,a),(b,c),(d,c),(b,c),(a,c)$.
    	(b) The bulged version of~$N$ with one possible cherry cover~$\{C_1,C_2,C_3,R_1,R_2,R_3\}$.
    	(c) An auxiliary graph that shows the order on the cherry shapes.
	In this case, the cherry cover is acyclic.}
	\label{fig:OrchardCherryCover}
\end{figure}


\section{Tree-based networks}

In this section, we show that a binary network is tree-based if and only if it has a cherry decomposition.
We do this by showing that for non-binary networks, the same characterization holds if we look at cherry covers in the bulged version of the network. Taking the bulged version is crucial in this characterization.
Figure~\ref{subfig:JettenLeoNonBulgeCover} (from~\citep{jetten2018nonbinary}) is an example of a (non-bulged) semi-binary network that is not tree-based with a cherry cover.
In the same figure, we show that its bulged version does not have a cherry cover (Figure~\ref{subfig:JettenLeoAuxGraph}), and also that contracting one of the edges in the network yields a non-binary network that is tree-based (Figure~\ref{subfig:JettenLeoTBN}).
This latter point proves the following observation.

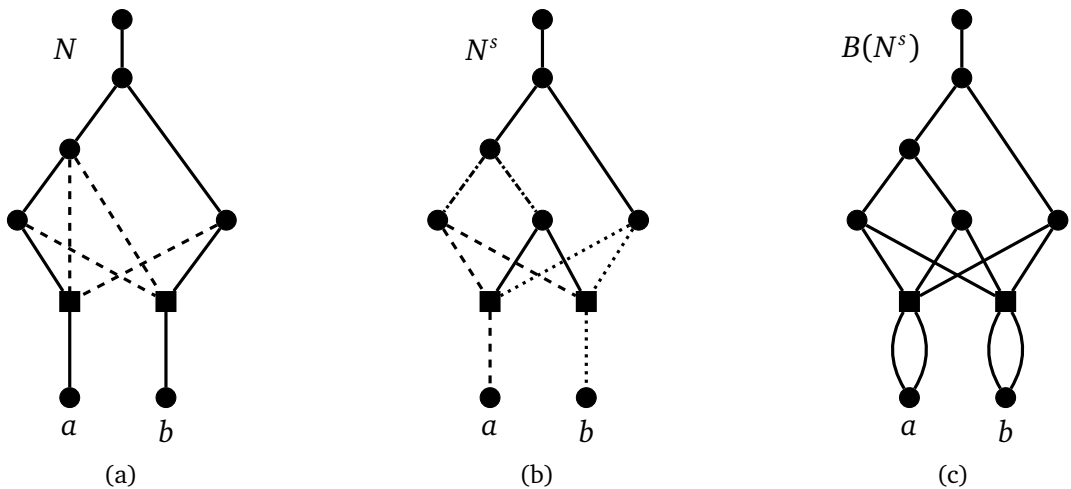
\begin{figure}
	\begin{subfigure}[b]{.33\textwidth}
		\centering
		\begin{tikzpicture}[every node/.style={draw, circle, fill, inner sep=0pt},
		square/.style={regular polygon, regular polygon sides=4}]
	       	\tikzset{edge/.style={very thick}}
	       	
	       		\node[] (root) at (0,5)	{a};
	       	
	       		\node[below = 0.5 of root] 				(t1)	{a};
			\node[below left = 0.75 and 0.5 of t1]		(t2)	{a};
			\node[below left = 0.75 and 0.5 of t2]		(t3)	{a};
			\node[right = 2.5 of t3]				(t5)	{a};
			
			\node[square, below = 1.75 of t2]	(r1)	{a};
			\node[square, right = 1 of r1]		(r2)	{a};

			\node[below = 1 of r1]		(l1)	{a};
			\node[below = 1 of r2]		(l2)	{a};
			
			\draw[edge]			(root) -- (t1) node[draw = none, fill = none, left= 5mm, midway]{\large $N$};
			\draw[edge]			(t1)--(t2);
			\draw[edge]			(t1)--(t5);
			\draw[edge]			(t2)--(t3);
			\draw[edge, dashed]		(t2)--(r1);
			\draw[edge, dashed]		(t2)--(r2);
			
			\draw[edge]			(t3)--(r1);
			\draw[edge, dashed]		(t5)--(r1);
			
			\draw[edge, dashed]		(t3)--(r2);
			\draw[edge]			(t5)--(r2);
			
			\draw[edge]			(r1)--(l1);
			\draw[edge]			(r2)--(l2);
			
                		\node[draw = none, fill = none, below=1mm of l1]  	(a) {\large $a$};
                		\node[draw = none, fill = none, below=1mm of l2]  	(b) {\large $b$};
			
	 	\end{tikzpicture}
	 	\caption{}
	 	\label{subfig:JettenLeoTBN}
 	\end{subfigure}
	\begin{subfigure}[b]{.33\textwidth}
		\centering
		\begin{tikzpicture}[every node/.style={draw, circle, fill, inner sep=0pt},
		square/.style={regular polygon, regular polygon sides=4}]
	       	\tikzset{edge/.style={very thick}}
	       	
	       		\node[] (root) at (0,5)	{a};
	       	
	       		\node[below = 0.5 of root] 				(t1)	{a};
			\node[below left = 0.75 and 0.5 of t1]		(t2)	{a};
			\node[below left = 0.75 and 0.5 of t2]		(t3)	{a};
			\node[below right = 0.75 and 0.5 of t2]	(t4)	{a};
			\node[right = 1 of t4]					(t5)	{a};
			
			\node[square, below = 1.75 of t2]	(r1)	{a};
			\node[square, right = 1 of r1]		(r2)	{a};

			\node[below = 1 of r1]		(l1)	{a};
			\node[below = 1 of r2]		(l2)	{a};
			
			\draw[edge]			(root) -- (t1) node[draw = none, fill = none, left= 5mm, midway]{\large $N^s$};
			\draw[edge]			(t1)--(t2);
			\draw[edge]			(t1)--(t5);
			\draw[edge, densely dashdotted]		(t2)--(t3);
			\draw[edge, densely dashdotted]		(t2)--(t4);
			
			\draw[edge, dashed]		(t3)--(r1);
			\draw[edge]			(t4)--(r2);
			\draw[edge, dotted]		(t5)--(r1);
			
			\draw[edge, dashed]		(t3)--(r2);
			\draw[edge]			(t4)--(r1);
			\draw[edge,dotted]		(t5)--(r2);
			
			\draw[edge, dashed]		(r1)--(l1);
			\draw[edge, dotted]		(r2)--(l2);
			
                		\node[draw = none, fill = none, below=1mm of l1]  	(a) {\large $a$};
                		\node[draw = none, fill = none, below=1mm of l2]  	(b) {\large $b$};
			
	 	\end{tikzpicture}
	 	\caption{}
	 	\label{subfig:JettenLeoNonBulgeCover}
 	\end{subfigure}
	\begin{subfigure}[b]{.33\textwidth}
		\centering
		\begin{tikzpicture}[every node/.style={draw, circle, fill, inner sep=0pt},
		square/.style={regular polygon, regular polygon sides=4}]
	       	\tikzset{edge/.style={very thick}}
	       	
	       		\node[] (root) at (0,5)	{a};
	       	
	       		\node[below = 0.5 of root] 				(t1)	{a};
			\node[below left = 0.75 and 0.5 of t1]		(t2)	{a};
			\node[below left = 0.75 and 0.5 of t2]		(t3)	{a};
			\node[below right = 0.75 and 0.5 of t2]	(t4)	{a};
			\node[right = 1 of t4]					(t5)	{a};
			
			\node[square, below = 1.75 of t2]	(r1)	{a};
			\node[square, right = 1 of r1]		(r2)	{a};

			\node[below = 1 of r1]		(l1)	{a};
			\node[below = 1 of r2]		(l2)	{a};
			
			\draw[edge]			(root) -- (t1) node[draw = none, fill = none, left= 5mm, midway]{\large $B(N^s)$};
			\draw[edge]			(t1)--(t2);
			\draw[edge]			(t1)--(t5);
			\draw[edge]			(t2)--(t3);
			\draw[edge]			(t2)--(t4);
			
			\draw[edge]		(t3)--(r1);
			\draw[edge]		(t4)--(r2);
			\draw[edge]		(t5)--(r1);
			
			\draw[edge]		(t3)--(r2);
			\draw[edge]		(t4)--(r1);
			\draw[edge]		(t5)--(r2);
			
			\draw[edge, bend right]	(r1)edge(l1);
			\draw[edge, bend left]		(r1)edge(l1);
			\draw[edge, bend right]	(r2)edge(l2);
			\draw[edge, bend left]		(r2)edge(l2);
			
                		\node[draw = none, fill = none, below=1mm of l1]  	(a) {\large $a$};
                		\node[draw = none, fill = none, below=1mm of l2]  	(b) {\large $b$};
			
	 	\end{tikzpicture}
	 	\caption{}
	 	\label{subfig:JettenLeoAuxGraph}
 	\end{subfigure}
    \caption{(a) A non-binary tree-based network~$N$ on~$\{a,b\}$. A base tree is indicated by the solid edges.
    (b) A semi-binary resolution~$N^{s}$ of~$N$ that is not tree-based with a cherry cover.
    (c) The bulged version of~$N^{s}$ that does not have a cherry cover.
    This can be seen as follows. 
    There are four edges incident to the leaves and each of these edges can only be covered by reticulated cherry shapes. 
    However, it is not possible to add four such reticulated cherry shapes without covering any middle edge of a reticulated cherry shape more than once.}
    \label{fig:JettenLeo}
\end{figure}

\begin{obs}
    Let~$N$ be a tree-based network.
    Then there may exist a semi-binary resolution of~$N$ that is not tree-based.
\end{obs}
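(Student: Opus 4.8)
The statement is an existence claim, so the plan is simply to exhibit a single witnessing pair $(N,N^s)$, and the natural choice is the one already drawn in Figure~\ref{fig:JettenLeo}: let $N$ be the network of Figure~\ref{subfig:JettenLeoTBN} and $N^s$ the network of Figure~\ref{subfig:JettenLeoNonBulgeCover}. First I would record the easy bookkeeping: every tree vertex of $N^s$ is a bifurcation, so $N^s$ is semi-binary, and contracting one edge of $N^s$ (the edge whose contraction creates the outdegree-$3$ vertex of $N$) turns $N^s$ into $N$; hence $N^s$ is a semi-binary resolution of $N$. Next I would verify that $N$ is tree-based: deleting the three dashed reticulation edges of $N$ and suppressing the resulting indegree-$1$/outdegree-$1$ vertices leaves a phylogenetic tree on $\{a,b\}$, so the solid edges form the embedding of a base tree and $N$ is tree-based.

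The substantive part is to show that $N^s$ is \emph{not} tree-based, and I would argue this directly, without appealing to Theorem~\ref{thm:CherryDecompositiontree-based}, to keep the proof independent of the main theorem. Write $r_1,r_2$ for the two reticulations of $N^s$ and $t_3,t_4,t_5$ for their three parents; the key structural observation is that both outgoing edges of each of $t_3,t_4,t_5$ lead into $\{r_1,r_2\}$. Suppose $N^s$ were tree-based, witnessed by a set $E_r$ of reticulation edges whose deletion, followed by suppression, yields a base tree $T$. Then $N^s$ with $E_r$ deleted is a spanning tree, so $E_r$ leaves $r_1$ with exactly one incoming edge and $r_2$ with exactly one incoming edge; consequently the embedding retains at most two of the six edges from $\{t_3,t_4,t_5\}$ into $\{r_1,r_2\}$, and these are the only outgoing edges of $t_3,t_4,t_5$. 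By pigeonhole at least one $t_k$ retains no outgoing edge, so it has indegree $1$ and outdegree $0$ in the embedding; it therefore survives suppression and is a leaf of $T$ not labelled by $X$, contradicting that $T$ is a phylogenetic tree on $X$. Hence no such $E_r$ exists and $N^s$ is not tree-based, which together with the previous paragraph proves the observation.

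I expect the only mild obstacle to be stating the non-tree-basedness cleanly; the pigeonhole count on the parents $t_3,t_4,t_5$ of $r_1$ and $r_2$ avoids all case analysis and makes the argument short. As an alternative to the direct argument, once Theorem~\ref{thm:CherryDecompositiontree-based} is available one may instead invoke it and observe (as indicated in the caption of Figure~\ref{fig:JettenLeo}) that $B(N^s)$ has no cherry cover: the four leaf-incident edges of $B(N^s)$ can only be covered by reticulated cherry shapes, and four such shapes cannot be placed without some middle edge of a reticulated cherry shape being covered twice, which violates the third condition of Definition~\ref{def:CherryCover}.
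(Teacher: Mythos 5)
Your proposal is correct and uses the same witness as the paper: the pair $(N,N^s)$ of Figure~\ref{fig:JettenLeo}, where $N^s$ is a semi-binary resolution of the tree-based network $N$. The difference is in how the non-tree-basedness of $N^s$ is established. The paper does not argue this itself: it takes $N^s$ from \cite{jetten2018nonbinary}, where it is known not to be tree-based, and simply notes that contracting an edge yields the tree-based network $N$ (the figure caption's cherry-cover argument concerns $B(N^s)$ and is only needed for the later characterization, not for this observation). You instead give a short self-contained argument: since all outgoing edges of $t_3,t_4,t_5$ enter $\{r_1,r_2\}$ and the embedding of any base tree retains exactly one incoming edge per reticulation, some $t_k$ loses all its outgoing edges and survives suppression as an unlabelled leaf, a contradiction. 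This pigeonhole argument is sound and buys independence both from the cited prior work and from Theorem~\ref{thm:CherryDecompositiontree-based}, at the cost of a slightly longer write-up; your fallback via the cherry-cover obstruction for $B(N^s)$ matches the reasoning in the paper's caption. Two trivial slips to fix: Figure~\ref{subfig:JettenLeoTBN} has four dashed reticulation edges, not three (deleting all of them and suppressing indeed leaves the cherry on $\{a,b\}$), and your phrase ``the parent of $p_y$ is a multifurcation''-style care is not needed here, but do state explicitly that the contracted edge is $t_2t_4$, whose contraction creates the outdegree-$3$ vertex.
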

\begin{lem}\label{lem:TBiffResTB}
    Let $N$ be a network. Then $N$ is tree-based if and only if some binary resolution of $N$ is tree-based. 
    $N$ is tree-based if and only if there some semi-binary resolution of~$N$ is tree-based. 
\end{lem}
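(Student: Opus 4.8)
The statement is four implications, but they reduce to two. A binary network is in particular semi-binary, so every binary resolution of $N$ is also a semi-binary resolution of $N$; hence it is enough to prove \textbf{(a)} if $N$ is tree-based then some binary resolution of $N$ is tree-based, and \textbf{(b)} if some semi-binary resolution of $N$ is tree-based then $N$ is tree-based. Granting these, the ``only if'' direction of the semi-binary equivalence is witnessed by the binary resolution produced in~(a), and the ``if'' direction of the binary equivalence is~(b) applied to the given binary (hence semi-binary) resolution. Throughout I will use the following reformulation of tree-basedness, immediate from the definition and in the spirit of Observation~\ref{obs:BulgeOfTBisTB}: a network $M$ is tree-based if and only if $M$ has a \emph{base embedding}, meaning a spanning tree $S$ of $M$, rooted at the root of $M$, whose vertices of outdegree~$0$ are exactly the leaves of $M$. (The embedding of any base tree is such an $S$; conversely, suppressing the indegree-$1$-outdegree-$1$ vertices of such an $S$ produces a base tree, whose linking arcs are the edges of $M$ outside $S$ --- and each such edge has a reticulation of $M$ as its head, since the unique incoming edge of a non-root tree vertex of $M$ must belong to $S$.)

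For~(a), I would start from a base tree $T$ of $N$ with base embedding $S$ and build a binary resolution $N'$ by expanding each non-binary vertex: a tree vertex of outdegree $d\ge3$ is replaced by a directed path of $d-1$ binary tree vertices (its incoming edge attached at the top, its $d$ outgoing edges distributed along the path), and a reticulation of indegree $d\ge3$ is replaced by a directed path of $d-1$ binary reticulations (its outgoing edge attached at the bottom, its $d$ incoming edges distributed along the path). Contracting exactly the newly created path edges recovers $N$, so $N'$ is a binary resolution of $N$. I would then extend $S$ to $S'$ by adding all new path edges, routing the base-embedding incoming edge of each expanded reticulation to the top of its path and, at each expanded multifurcation, distributing the outgoing edges so that an edge of $S$ is attached to the bottom of the path. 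A routine check shows $S'$ is a spanning tree of $N'$ (connected, acyclic, $|V(N')|-1$ edges) with no vertices of outdegree~$0$ other than the leaves, that is, a base embedding of $N'$; hence $N'$ is tree-based.

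For~(b), let $N'$ be a tree-based semi-binary resolution of $N$ with base embedding $S'$, and write $N=N'/F$. The crux is $F\subseteq E(S')$. First, $F$ contains no edge incident to a leaf or the root of $N'$, since contracting such an edge cannot yield a legal network vertex. Consequently every connected component $C$ of $F$ avoids the root and the leaves, the edges of $N'$ with both endpoints in $C$ are exactly the $|C|-1$ edges of $F$ inside $C$ (any other such edge would become a loop in $N$), and contracting $C$ gives a legal vertex of $N$. Using the identities ``$(\text{edges entering }C)=\sum_{z\in C}\mathrm{indeg}(z)-(|C|-1)$'' and ``$(\text{edges leaving }C)=\sum_{z\in C}\mathrm{outdeg}(z)-(|C|-1)$'', together with the facts that each vertex of $C$ has positive in- and outdegree while tree vertices have outdegree $\ge2$ and reticulations indegree $\ge2$, one checks that $C$ cannot contain both a tree vertex and a reticulation. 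So each component of $F$ consists of tree vertices only --- in which case each of its edges is the unique incoming edge of its head --- or of reticulations only --- in which case each of its edges is the unique outgoing edge of its tail; either way its edges lie in $S'$. Thus $F\subseteq E(S')$, and $S:=S'/F$ is a spanning tree of $N$ rooted at the root of $N$; since $F$ touches no leaf, the outdegree-$0$ vertices of $S$ are still exactly the leaves (given a non-leaf vertex of $N$, pick a vertex $z$ of its component with no proper descendant in that component: $z$ is not a leaf of $N'$, so it has an outgoing edge of $S'$, which must leave the component). Hence $S$ is a base embedding of $N$ and $N$ is tree-based.

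The heart of the argument is the claim $F\subseteq E(S')$ in~(b): contracting an arbitrary edge does not preserve tree-basedness in general (it need not even return a network), so one must use that the edges collapsed in passing from a resolution to $N$ are forced into every base embedding. The degree count on the components of $F$, with the ``all tree vertices or all reticulations'' dichotomy, is where this is established; everything else is bookkeeping around the base-embedding reformulation.
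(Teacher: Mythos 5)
Your reduction of the four implications to (a) and (b) is sound, and your part (a) is in the spirit of the paper's forward construction: the paper resolves each multifurcation into a caterpillar whose bottom vertex carries the edge used by the base tree (and cites Observation~3.2 of \cite{jetten2018nonbinary} for the binary case), while you additionally expand high-degree reticulations and extend the base embedding explicitly. That half is fine, if compressed into ``a routine check.''

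The gap is in part (b), at exactly the step you call the heart of the argument. Your dichotomy (each component $C$ of $F$ consists only of tree vertices or only of reticulations) is deduced from the unproved assertion that $C$ spans exactly $|C|-1$ edges of $N'$. Connectivity gives at least $|C|-1$, but nothing in the definitions gives at most: a DAG may contain undirected cycles, and the paper's definition of a (semi-)binary resolution asks only that contracting \emph{some} edge set returns $N$. Concretely, let $N$ have a binary tree vertex $v$ with parent $s$ and children $a,b$, and let $N'$ replace $v$ by three vertices $t_1,t_2,r$ with internal edges $t_1t_2$, $t_1r$, $t_2r$ and external edges $st_1$, $t_2a$, $rb$. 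Then $N'$ is binary, and contracting $F=\{t_1t_2,t_1r,t_2r\}$ returns $N$ (the merged vertex has indegree $1$ and outdegree $2$), so $N'$ is a binary, hence semi-binary, resolution of $N$ in the paper's literal sense; yet $C=\{t_1,t_2,r\}$ is mixed, has three internal edges rather than $|C|-1=2$, and $F\not\subseteq E(S')$ for \emph{any} base embedding $S'$, since a spanning tree contains only one of the two edges into $r$. So the claim $F\subseteq E(S')$, and with it the definition $S:=S'/F$, fails as stated; in this example the image of $S'$ under the contraction still happens to be a spanning tree, but your proof neither says this nor proves it, and establishing it for general mixed components is precisely the missing work. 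The repair is either to adopt the reading of ``resolution'' that the paper itself implicitly uses (only edges joining two tree vertices or two reticulations are contracted, as in the paper's proofs of this lemma and of Lemma~\ref{lem:OrchardResolution}), in which case monochromatic components are immediate, each contracted tree--tree edge is the unique incoming edge of its head, each contracted reticulation--reticulation edge is the unique outgoing edge of its tail, and your $F\subseteq E(S')$ argument goes through cleanly (indeed more carefully than the paper's terse ``contract the base tree along''); or else to handle mixed components directly, e.g.\ by showing the image of $S'$ with loops discarded still contains a spanning tree whose outdegree-$0$ vertices are exactly the leaves. A small separate slip: contracting an edge into a leaf can produce a perfectly legal vertex (a parent of outdegree $3$ becomes a tree vertex of outdegree $2$); what excludes such edges from $F$ is that the contraction must preserve the labelled leaf set, not vertex legality.
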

\begin{proof}
    The first statement follows from \cite{jetten2018nonbinary} Observation~3.2.
    To show the second statement, let~$N$ be a tree-based network, and let~$T$ be a base tree of~$N$.
    By definition of base trees,~$T$ must visit every tree vertex in the network.
    In particular, it must visit every multifurcation, and exit such vertices via one of its outgoing edges.
    Let~$t$ denote such a tree vertex and let~$s$ denote the child of~$t$ in~$N$ such that~$ts$ is an edge that is used by~$T$.
    Then we resolve~$t$ by replacing it by a caterpillar such that the parent of~$s$ is the bottom-most vertex.
    It remains to check that the base tree covers all the newly introduced vertices.
    However this is immediate; by the placement of~$s$, we note that the path from~$t$ to~$s$ covers all the newly introduced vertices.
    Therefore the tree~$T$ with the edge~$ts$ changed to the path from~$t$ to~$s$ is a base tree of the new network.
    Repeating this for all multifurcations yields a semi-binary resolution of~$N$ that is tree-based.

    On the other hand, if a semi-binary resolution~$N'$ of~$N$ is tree-based, then it is easy to see that~$N$ must also be tree-based.
    Indeed, upon contracting some of the edges of~$N'$, we adjust the base tree of~$N'$ by contracting the same edge in the base tree if it used that edge in the embedding, and not changing the base tree otherwise.
    Doing so gives a base tree of~$N$.
\end{proof}

\begin{thm}\label{thm:CherryDecompositiontree-based}
    A network~$N$ is tree-based if and only if~$B(N)$ has a cherry cover.
\end{thm}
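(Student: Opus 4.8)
The plan is to translate, essentially edge by edge, between the base-tree/linking-arc description of a tree-based $N$ and cherry covers of $B(N)$. The bulged version is exactly what makes this translation possible: a reticulation of indegree $k$ in $N$ has $k-1$ linking arcs entering it and, correspondingly, $k-1$ parallel ``bulge'' edges leaving it in $B(N)$, and these two sets will be put in bijection. I would work directly with the definition of a tree-based network, using the identity $B^{-1}(B(N))=N$; Observation~\ref{obs:BulgeOfTBisTB} is not strictly needed but could be used instead.

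For the forward direction, write $N$ in the form provided by the definition: a base tree $T$ with some edges subdivided at attachment points, together with a set $L$ of linking arcs, and with the unused attachment points not yet suppressed, so that the embedded tree is $N$ minus $L$. Each attachment point has either indegree $1$ --- and then at least one linking arc leaves it, so it has outdegree at least $2$ --- or outdegree $1$ --- and then at least one linking arc enters it and it is a reticulation of $N$. For every linking arc $\ell=a\to v$, create a reticulated cherry shape whose middle edge is $\ell$, whose internal reticulation is $v$ with internal-reticulation edge one of the parallel bulge edges leaving $v$ (choosing a distinct one for each linking arc into $v$, which is possible since the number of those bulge edges is $\mathrm{indeg}_N(v)-1$, exactly the number of linking arcs into $v$), and whose free edge is the unique embedded-tree edge leaving $a$ (which exists because $a$ then has outdegree at least $2$). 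Cover the remaining edges --- precisely the embedded-tree edges whose tail is a genuine tree vertex of $T$ --- by cherry shapes exactly as in the proof of Lemma~\ref{lem:OnlyCherryCoverThenTree}. One then checks that this is a cherry cover: every non-root edge is covered (bulge edges as internal-reticulation edges, linking arcs as middle edges, embedded-tree out-edges of the other attachment points as free edges, and edges at tree vertices of $T$ by cherry shapes); each outgoing edge of a reticulation is an internal-reticulation edge of exactly one shape; and each middle edge, being a linking arc, lies in exactly one shape.

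For the converse, let $P$ be a cherry cover of $B(N)$. The key observation is that $P$ already \emph{forces} which incoming edge of each reticulation must be kept. For a reticulation $v$, the shapes of $P$ having $v$ as internal reticulation number $\mathrm{indeg}_N(v)-1$ (one for each outgoing edge of $v$ in $B(N)$, by the exact-once conditions of Definition~\ref{def:CherryCover}) and use that many distinct incoming edges of $v$ as middle edges; since a parallel bulge edge entering a vertex can never be a middle edge, a short count shows that $v$ has at most one reticulation parent and that exactly one edge of $N$ entering $v$ corresponds only to non-middle edges of $B(N)$ --- namely the edge from the reticulation parent when there is one. Delete from $N$ every reticulation edge other than these forced ones, and suppress indegree-1 outdegree-1 vertices. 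The result has $|V(N)|-1$ edges, indegree $1$ at every non-root vertex, and is acyclic, hence is a subdivision of a tree; it only remains to check that no tree vertex of $N$ lost all of its outgoing edges. If a tree vertex $t$ with children $w_1,\dots,w_d$ (then necessarily all reticulations) had all of $tw_1,\dots,tw_d$ deleted, then each $tw_i$, being deleted, would be a middle edge of some $w_i$-shape, whose free edge is then $tw_j$ for some $j\ne i$; but $tw_j$ is likewise a deleted edge, hence also a middle edge, so it would be covered by two distinct shapes, contradicting Definition~\ref{def:CherryCover}. So the suppressed graph is a base tree of $N$, and $N$ is tree-based.

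The step I expect to be the main obstacle is this last one: showing the forced deletions cannot strand a tree vertex. Its proof, and the ``forcing'' step it relies on, both turn on carefully counting edges in $B(N)$ rather than in $N$ --- in particular on the facts that a parallel bulge edge can never serve as a middle edge and that a reticulation has at most one reticulation parent --- so the reasoning really must take place in the bulged graph, which is the whole reason $B(N)$ appears in the statement. A slower alternative would be to prove the statement first for semi-binary networks, where cherry decompositions cover every edge exactly once and the bookkeeping is lighter, and then transfer to $N$ along a semi-binary resolution via Lemma~\ref{lem:TBiffResTB}.
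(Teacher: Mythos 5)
Your proof is correct and takes essentially the same route as the paper's: for the forward direction, cherry shapes covering the embedded base tree (via Lemma~\ref{lem:OnlyCherryCoverThenTree}) plus one reticulated cherry shape per linking arc, with the parallel bulge edges serving as the reticulation edges of those shapes; for the converse, deleting the middle edges of the cover to recover a base tree. Your explicit argument that no tree vertex can lose all its outgoing edges---because a free edge would then also have to be a middle edge, violating the exactly-once condition of Definition~\ref{def:CherryCover}---is precisely the reasoning the paper compresses into a single sentence.
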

\begin{proof}
    First suppose that~$N$ is a tree-based network.
    Let~$T$ be a base tree of~$N$, and let~$E_r = \{e_1,\ldots, e_k\}$ denote the reticulation edges that were deleted to obtain~$T$ from~$N$.
    By Lemma~\ref{lem:OnlyCherryCoverThenTree}, $T$ has a cherry cover $P$ consisting of only cherry shapes. 
    We use this cherry cover to produce a cherry cover of $N$.
    
    Each cherry shape $C$ in $P$ maps to a pair of paths $c_1$ and $c_2$ in $B(N)$ that are vertex-disjoint except at their highest vertices. 
    All these paths together cover the edges of the embedding $E_T$ of $T$ in $B(N)$.
    Taking the first arc of both $c_1$ and $c_2$, we obtain a cherry shape $C|_N$ of $B(N)$. 
    Let $P'=\{C|_N:C\in P\}$ be the set of cherry shapes in $B(N)$ obtained from cherry shapes in $P$, and let $F=E_T \setminus P'$ be the edges of~$E_T$ that are not covered by $P'$.
    
    The edges of $B(N)$ apart from the root edge that are not yet covered by $P'$ are as follows:
    \begin{itemize}
        \item the reticulation edges $e_i = u_i v_i \in E_r$,
        \item all outgoing edges of~$v_i$ for all~$i\in[k]$,
        \item and for each $u_i$ for all~$i\in [k]$, at most one outgoing edge~$f_{u_i}\in F$.
    \end{itemize}
    For the last point, if the endpoint $u_i$ were to have more than one outgoing edges in $F$, then they would be part of a cherry shape in $P'$; hence, they cannot be in $F$, but they must be in $P'$.
    Therefore this case is not possible.
    If there is no outgoing edge of $u_i$ contained in $F$, then~$u_i$ must have two outgoing edges that form a cherry shape in~$B(N)$ that is contained in~$P'$.
    Otherwise~$u_i$ would not have been covered by~$E_T$, which would contradict the fact that~$T$ was a base tree of~$B(N)$.
    If there was exactly one outgoing edge $f_{u_i}$ of $u_i$ contained in $F$, then $u_i$ was not a tree vertex in~$T$ (in particular it must have been added as an attachment point). 
    Thus, $f_{u_i}$ is not a highest arc in the embedding of a cherry shape of $P$, so $f_{u_i}$ is not covered by $P'$. 
    Observe that~$f_{u_i}$ cannot be the reticulation edge~$e_i$ itself, since~$E_r$ contains all the reticulation edges that are not used in the embedding of~$T$ in~$N$.
    Therefore, each endpoint~$u_i$ of a reticulation edge $e_i = u_iv_i\in E_r$ has an outgoing edge in $F$, or an outgoing edge that is covered by $P'$.
   
    We augment $P'$ to a cherry cover $P''$ of $B(N)$ by adding a reticulated cherry shape $\{v_ix_i,u_iv_i,u_iy_i\}$ for each $e_i=u_iv_i\in E_r$ satisfying the following conditions: for each $i$, either $u_iy_i\in F$ or $u_iy_i$ is covered by $P'$, and for any $i \neq j$, $v_ix_i\neq v_jx_j$.
    This last condition is possible because the number of outgoing edges of a reticulation vertex $v$ is equal to the number of incoming arcs of $v$ that are in $E_r$.
    By construction, $P''$ is a cherry cover of $B(N)$.
    
        \medskip
    
    Now suppose that the bulged version of the network~$N$ has a cherry cover~$P$.
    For every reticulation vertex~$v$ of indegree~$k$, exactly~$k-1$ incoming edges are contained in a reticulated cherry shape as the middle edge in~$P$.
    By definition of reticulated cherry shapes, the tail of each of these reticulation edges has at least one child other than~$v$.
    This inherently implies that deleting these~$k-1$ reticulation edges will not create any unlabelled outdegree-0 vertices.
    Repeating this deletion for all such reticulation edges and removing all parallel edges returns a spanning tree of the graph whose leaves are labelled bijectively by the leaf-set of~$N$; therefore $B(N)$ is tree-based.
    By Observation~\ref{obs:BulgeOfTBisTB},~$N$ is tree-based.
 \end{proof}

By Lemma~\ref{lem:OnlyCherryCoverThenTree}, there exists a cherry cover of a tree on~$n$ leaves that contains exactly~$n-1$ cherry shapes.
The next corollary follows immediately from this observation and the arguments used in the proof of Theorem~\ref{thm:CherryDecompositiontree-based}.

\begin{cor}\label{cor:CherryCoverCount}
	Let $N$ be a tree-based network on~$n$ leaves and reticulation number~$r$.
	Then there exists a cherry cover of~$N$ that contains exactly~$n-1$ cherry shapes and exactly~$r$ reticulated cherry shapes.
\end{cor}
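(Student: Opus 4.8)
The plan is to re-run the ``$N$ tree-based $\Rightarrow$ $B(N)$ has a cherry cover'' direction of the proof of Theorem~\ref{thm:CherryDecompositiontree-based}, but starting from the particular cherry cover of the base tree supplied by Lemma~\ref{lem:OnlyCherryCoverThenTree}, and then to count the shapes the construction produces. First I would fix a set $E_r$ of reticulation edges whose deletion, followed by suppression of all indegree-1 outdegree-1 vertices, turns $N$ into a base tree $T$; since $N$ has $n$ leaves, so does $T$. By Lemma~\ref{lem:OnlyCherryCoverThenTree}, $T$ has a cherry cover $P$ consisting of exactly $n-1$ cherry shapes. Feeding this $P$ into the construction of Theorem~\ref{thm:CherryDecompositiontree-based} yields the set $P' = \{C|_N : C \in P\}$ of cherry shapes of $B(N)$, which is then augmented to a cherry cover $P''$ of $B(N)$ by adding, for each edge $e_i = u_iv_i \in E_r$, exactly one reticulated cherry shape $\{v_ix_i, u_iv_i, u_iy_i\}$.

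Then I would verify the two counts. For the cherry shapes: $|P'| = |P| = n-1$, because the assignment $C \mapsto C|_N$ is injective---$C|_N$ is built from the first arcs of the two paths of $B(N)$ leaving the image of the internal vertex of $C$, and that image is a single (unsubdivided) vertex of $B(N)$ since internal vertices of cherry shapes of $T$ are tree vertices of $T$ and hence of $N$; so cherry shapes of $P$ with distinct internal vertices map to shapes with distinct internal vertices, and two cherry shapes of $P$ sharing an internal vertex use different pairs of outgoing arcs of it and map to distinct shapes. For the reticulated cherry shapes: the augmentation adds no cherry shapes and exactly $|E_r|$ reticulated cherry shapes, so it suffices to observe $|E_r| = r$---for each reticulation $v$ of indegree $k_v$ exactly one incoming arc of $v$ survives in the embedding of $T$, whence $|E_r| = \sum_v (k_v - 1)$, the number of reticulation edges minus the number of reticulation vertices, i.e.\ the reticulation number $r$. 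Thus $P''$ has exactly $n-1$ cherry shapes and exactly $r$ reticulated cherry shapes, and since $B^{-1}(B(N)) = N$ it is the desired cherry cover of $N$.

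The hard part, such as it is, will be only this bookkeeping: Theorem~\ref{thm:CherryDecompositiontree-based} already supplies the structure, and the two places needing an explicit line are the injectivity of $C \mapsto C|_N$ (so that $|P'|$ is exactly, not merely at most, $n-1$) and the identity $|E_r| = r$ (the standard fact that one obtains a spanning tree of a network by deleting reticulation-number-many edges). I do not expect any genuine difficulty beyond these two observations.
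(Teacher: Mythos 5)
Your proposal is correct and follows essentially the same route as the paper: the paper's proof of this corollary simply invokes the $n-1$ cherry shapes from Lemma~\ref{lem:OnlyCherryCoverThenTree} applied to a base tree and the construction in the proof of Theorem~\ref{thm:CherryDecompositiontree-based}, which adds one reticulated cherry shape per edge of $E_r$ with $|E_r|=r$. Your two explicit bookkeeping checks (injectivity of $C\mapsto C|_N$ and $|E_r|=r$) are exactly the details the paper leaves implicit.
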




\section{Orchard networks}

We now show that a binary network is orchard if and only if it has an acyclic cherry decomposition.
Like in the previous section, we do this by showing that a non-binary network is orchard if and only if the bulged version of the network has an acyclic cherry cover.
Note that this added condition of taking the bulged version of the network is necessary.
There exist networks that are not orchard with an acyclic cherry cover (see Figure~\ref{fig:NotOrchAcyclic}).

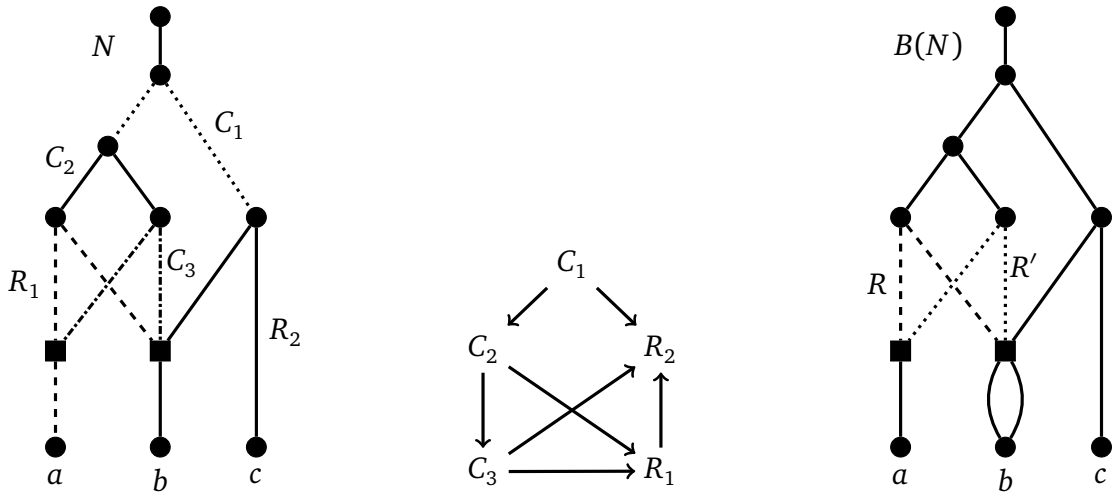
\begin{figure}[h!]
	\begin{subfigure}[b]{.33\textwidth}
		\centering
		\begin{tikzpicture}[every node/.style={draw, circle, fill, inner sep=0pt},
		square/.style={regular polygon, regular polygon sides=4}]
	       	\tikzset{edge/.style={very thick}}
	       	
	       	\node[] (root) at (0,5)	{a};
	       	
	       	\node[below = 0.5 of root] 				(t1)	{a};
			\node[below left = 0.75 and 0.5 of t1]		(t2)	{a};
			\node[below left = 0.75 and 0.5 of t2]		(t3)	{a};
			\node[below right = 0.75 and 0.5 of t2]	(t4)	{a};
			\node[right = 1 of t4]					(t5)	{a};
			
			\node[square, below = 1.5 of t3]	(r1)	{a};
			\node[square, below = 1.5 of t4]	(r2)	{a};

			\node[below = 1 of r1]		(l1)	{a};
			\node[below = 1 of r2]		(l2)	{a};
			\node[right = 1 of l2]			(l3)	{a};
			
			\draw[edge]			(root) -- (t1)node[draw = none, fill = none, left= 5mm, midway]{\large $N$};
			\draw[edge, dotted]			(t1)--(t2);
			\draw[edge, dotted]			(t1)--(t5) node[draw = none, fill = none, above right = 1mm, midway]{\large $C_1$};
			\draw[edge]			(t2)--(t3) node[draw = none, fill = none, above left = 1mm, midway]{\large $C_2$};
			\draw[edge]			(t2)--(t4);
			
			\draw[edge, dashed]			(t3)--(r1) node[draw = none, fill = none, left = 1mm, midway]{\large $R_1$};
			\draw[edge, densely dashdotted]			(t4)--(r2) node[draw = none, fill = none, above right = 1mm, midway]{\large $C_3$};
			
			\draw[edge, dashed]			(t3)--(r2);
			\draw[edge, densely dashdotted]			(t4)--(r1);
			\draw[edge]			(t5)--(r2);
			
			\draw[edge, dashed]			(r1)--(l1);
			\draw[edge]			(r2)--(l2);
			\draw[edge]			(t5)--(l3) node[draw = none, fill = none, right = 1mm, midway]{\large $R_2$};
			
                	\node[draw = none, fill = none, below=1mm of l1]  	(a) {\large $a$};
                	\node[draw = none, fill = none, below=1mm of l2]  	(b) {\large $b$};
                	\node[draw = none, fill = none, below=1mm of l3]	(c) {\large $c$};
			
	 	\end{tikzpicture}
 	\end{subfigure}
	\begin{subfigure}[b]{.33\textwidth}
		\centering
		\begin{tikzpicture}[edge/.style={very thick, ->}]
	       	
	       		\node[] (C1)	at (0,-5)	{\large $C_1$};
	       	
		       	\node[below left = 0.5 and 0.5 of C1] 	(C2) {\large $C_2$};
		       	\node[below right= 0.5 and 0.5 of C1] 	(R2) {\large $R_2$};
	       		\node[below=1 of C2] 		(C3)	{\large $C_3$};
	       		\node[below=1 of R2]		(R1) {\large $R_1$};
			
			\draw[edge] (C1)--(C2);
			\draw[edge] (C1)--(R2);
			\draw[edge] (C2)--(C3);
			\draw[edge] (C2)--(R1);
			\draw[edge] (C3)--(R1);
			\draw[edge] (C3)--(R2);
			\draw[edge] (R1)--(R2);
			
	 	\end{tikzpicture}
	\end{subfigure}
		\begin{subfigure}[b]{.33\textwidth}
		\centering
		\begin{tikzpicture}[every node/.style={draw, circle, fill, inner sep=0pt},
		square/.style={regular polygon, regular polygon sides=4}]
	       	\tikzset{edge/.style={very thick}}
	       	
	       	\node[] (root) at (0,5)	{a};
	       	
	       	\node[below = 0.5 of root] 				(t1)	{a};
			\node[below left = 0.75 and 0.5 of t1]		(t2)	{a};
			\node[below left = 0.75 and 0.5 of t2]		(t3)	{a};
			\node[below right = 0.75 and 0.5 of t2]	(t4)	{a};
			\node[right = 1 of t4]					(t5)	{a};
			
			\node[square, below = 1.5 of t3]	(r1)	{a};
			\node[square, below = 1.5 of t4]	(r2)	{a};

			\node[below = 1 of r1]		(l1)	{a};
			\node[below = 1 of r2]		(l2)	{a};
			\node[right = 1 of l2]			(l3)	{a};
			
			\draw[edge]			(root) -- (t1) node[draw = none, fill = none, left= 5mm, midway]{\large $B(N)$};
			\draw[edge]			(t1)--(t2);
			\draw[edge]			(t1)--(t5);
			\draw[edge]			(t2)--(t3);
			\draw[edge]			(t2)--(t4);
			
			\draw[edge, dashed]			(t3)--(r1) node[draw = none, fill = none, left = 1mm, midway]{\large $R$};
			\draw[edge, dotted]			(t4)--(r2) node[draw = none, fill = none, above right = 1mm, midway]{\large $R'$};
			
			\draw[edge, dashed]			(t3)--(r2);
			\draw[edge, dotted]			(t4)--(r1);
			\draw[edge]			(t5)--(r2);
			
			\draw[edge]				(r1)--(l1);
			\draw[edge, bend left]		(r2)edge(l2);
			\draw[edge, bend right]	(r2)edge(l2);
			\draw[edge]				(t5)--(l3);
			
                	\node[draw = none, fill = none, below=1mm of l1]  	(a) {\large $a$};
                	\node[draw = none, fill = none, below=1mm of l2]  	(b) {\large $b$};
                	\node[draw = none, fill = none, below=1mm of l3]	(c) {\large $c$};
			
	 	\end{tikzpicture}
 	\end{subfigure}

	\caption{An example showing why we consider cherry covers in bulged versions of networks.
	The tree-based network~$N$ from Figure~\ref{fig:TBCherryCover} (a).
	We obtain a cherry cover~$\{C_1,C_2,C_3,R_1,R_2\}$ of~$N$ that is acyclic, although the network is not orchard.
	The bulged version of~$N$ that shows that every cherry cover of~$N$ must be cyclic.
	Indeed, the edges labelled~$R$ and~$R'$ must be contained in a reticulated cherry shape whose endpoint is the leaf~$a$ or~$b$.
	These shapes will be directly above one another, creating a cycle in the auxiliary graph.}
	\label{fig:NotOrchAcyclic}
\end{figure}

In Figure~\ref{fig:NotOrchAcyclic}, observe that any cherry cover of the bulged version of the network is not acyclic.
The edge incident to~$a$ must be covered by a reticulated cherry shape -- say it is covered by a reticulated cherry shape~$R$.
In the bulged version of the network, there are parallel edges incident to the leaf~$b$; one of these edges must be covered by a reticulated cherry shape containing the edges of~$R'$.
But~$R$ and~$R'$ are above one another, and therefore no cherry cover can be acyclic.

In Figure~\ref{fig:OrchardCE}, the network~$N$ is an orchard network, as~$(a,b)(d,c)(b,c)(a,c)(d,c)$ is a sequence of reducible pairs that reduce~$N$ to a network on a single leaf~$c$.
In the same figure, a semi-binary resolution~$N^s$ of~$N$ that is not orchard is presented.
Since there are no reducible pairs (no cherries nor reticulated cherries) in~$N^s$, it is immediately clear that~$N^s$ is not orchard.
Therefore we obtain the following observation.

\begin{obs}
    Let $N$ be an orchard network. 
    Then there may exist a semi-binary resolution of $N$ that is not orchard.
\end{obs}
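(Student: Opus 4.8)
The statement is an existence claim, so the plan is to exhibit a single orchard network $N$ together with one semi-binary resolution $N^s$ of $N$ that is not orchard; this is precisely the content of Figure~\ref{fig:OrchardCE}. Note that such an $N$ must contain a multifurcation, since a semi-binary network is its own (trivial) semi-binary resolution, so the example has to be genuinely non-binary. I would take $N$ to be the four-leaf network on $\{a,b,c,d\}$ depicted in that figure, in which a single multifurcating tree vertex feeds, via the reticulations, into the leaves.

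First I would check that $N$ is orchard by writing down an explicit reduction sequence of ordered pairs that collapses it to a one-leaf network, namely $(a,b)(d,c)(b,c)(a,c)(d,c)$. For this it suffices to verify, one pair at a time, that each listed pair is a reducible pair (a cherry, or a reticulated cherry with the reticulation on the first coordinate) in the graph obtained after reducing the earlier pairs, using Observation~\ref{obs:ReduceBulgedGeneral} to keep track of the intermediate graphs; the final reduction leaves the single leaf $c$. This is a routine mechanical check on a small network.

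Next I would construct $N^s$ by resolving the multifurcation of $N$ into a caterpillar of bifurcations, choosing the order of its children along the caterpillar so as to destroy every cherry and every reticulated cherry. I would then argue that $N^s$ has no reducible pair at all: going through the $\binom{4}{2}$ unordered pairs of leaves, no two of them share a parent (no cherry), and for no ordered pair $(x,y)$ is the parent of $x$ a reticulation whose unique parent is also a parent of $y$ (no reticulated cherry). Since any network with at least two leaves admits a reduction step only if it contains a reducible pair, and $N^s$ contains none, no sequence of reductions can bring $N^s$ to a single leaf, so $N^s$ is not orchard. Finally, $N^s$ is obtained from $N$ by contracting exactly the edges introduced when resolving the multifurcation, so it is a semi-binary resolution of $N$ by definition, which completes the proof.

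The one place where any care is needed is the choice of resolution: the children of the multifurcation have to be arranged along the caterpillar so that \emph{simultaneously} every cherry and every reticulated cherry is broken, and one then has to confirm exhaustively that no reducible pair survives in $N^s$. The remaining ingredients --- checking the reduction sequence for $N$, and the observation that a reducible-pair-free network on at least two leaves cannot be orchard --- are immediate from the definitions.
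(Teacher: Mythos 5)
Your proposal is correct and follows essentially the same route as the paper: the paper also establishes the observation by exhibiting the orchard network $N$ of Figure~\ref{fig:OrchardCE} with the reduction sequence $(a,b)(d,c)(b,c)(a,c)(d,c)$, together with the semi-binary resolution $N^s$ obtained by resolving the multifurcation so that $N^s$ contains no cherry and no reticulated cherry, hence is not orchard.
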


\begin{figure}
	\begin{subfigure}[b]{.5\textwidth}
		\centering
		\begin{tikzpicture}[every node/.style={draw, circle, fill, inner sep=0pt},
		square/.style={regular polygon, regular polygon sides=4}]
	       	\tikzset{edge/.style={very thick}}
	       	
	       	\node[] (root) at (0,5)	{a};
	       	
	       	\node[below = 0.5 of root] 				(t1)	{a};
			\node[below left = 0.5 and 0.5 of t1]		(t2)	{a};
			\node[below right = 0.5 and 0.5 of t2]		(t3)	{a};
			
			\node[square, below left = 2.5 and 1 of t1]	(r1)	{a};
			\node[square, below right = 2.5 and 1 of t1]	(r2)	{a};
			
			\node[below left = 3.25 and 1.5 of t1] 		(l1) {a};
			\node[below left = 3.25 and 0.5 of t1] 		(l2) {a};
			\node[below right = 3.25 and 0.5 of t1] 	(l3) {a};
			\node[below right= 3.25 and 1.5 of t1] 		(l4) {a};

			\draw[edge]			(root) -- (t1) node[draw = none, fill = none, left= 5mm, midway]{\large $N$};
			\draw[edge]			(t1)--(t2);
			\draw[edge]			(t1)--(r2);
			\draw[edge]			(t2)--(r1);
			\draw[edge]			(t2)--(t3);
			\draw[edge]			(t3)--(r1);
			\draw[edge]			(t3)--(r2);
			
			\draw[edge]			(r1)--(l1);
			\draw[edge]			(t3)--(l2);
			\draw[edge]			(t3)--(l3);
			\draw[edge]			(r2)--(l4);
			
                	\node[draw = none, fill = none, below=1mm of l1]  	(a) {\large $a$};
                	\node[draw = none, fill = none, below=1mm of l2]  	(b) {\large $b$};
                	\node[draw = none, fill = none, below=1mm of l3]	(c) {\large $c$};
                	\node[draw = none, fill = none, below=1mm of l4]	(c) {\large $d$};
			
	 	\end{tikzpicture}
 	\end{subfigure}
	\begin{subfigure}[b]{.5\textwidth}
		\centering
		\begin{tikzpicture}[every node/.style={draw, circle, fill, inner sep=0pt},
		square/.style={regular polygon, regular polygon sides=4}]
	       	\tikzset{edge/.style={very thick}}
	       	
	       	\node[] (root) at (0,5)	{a};
	       	
	       	\node[below = 0.5 of root] 				(t1)	{a};
			\node[below left = 0.5 and 0.5 of t1]		(t2)	{a};
			\node[below = 1 of t1]	(t3) {a};
			\node[below = 1.5 of t1]	(t4) {a};
			\node[below = 2 of t1]	(t5) {a};
			
			\node[square, below left = 2.5 and 1 of t1]	(r1)	{a};
			\node[square, below right = 2.5 and 1 of t1]	(r2)	{a};
			
			\node[below left = 3.25 and 1.5 of t1] 		(l1) {a};
			\node[below left = 3.25 and 0.5 of t1] 		(l2) {a};
			\node[below right = 3.25 and 0.5 of t1] 	(l3) {a};
			\node[below right= 3.25 and 1.5 of t1] 		(l4) {a};

			\draw[edge]			(root) -- (t1) node[draw = none, fill = none, left= 5mm, midway]{\large $N^s$};
			\draw[edge]			(t1)--(t2);
			\draw[edge]			(t1)--(r2);
			\draw[edge]			(t2)--(r1);
			\draw[edge]			(t2)--(t3);
			\draw[edge]			(t5)--(r1);
			\draw[edge]			(t5)--(r2);
			\draw[edge]			(t3)--(t5);
			
			\draw[edge]			(r1)--(l1);
			\draw[edge]			(t3)--(l2);
			\draw[edge]			(t4)--(l3);
			\draw[edge]			(r2)--(l4);
			
                	\node[draw = none, fill = none, below=1mm of l1]  	(a) {\large $a$};
                	\node[draw = none, fill = none, below=1mm of l2]  	(b) {\large $b$};
                	\node[draw = none, fill = none, below=1mm of l3]	(c) {\large $c$};
                	\node[draw = none, fill = none, below=1mm of l4]	(c) {\large $d$};
			
	 	\end{tikzpicture}
	\end{subfigure}
    \caption{An orchard network $N$ and a non-orchard semi-binary resolution $N^s$ of~$N$.}
    \label{fig:OrchardCE}
\end{figure}
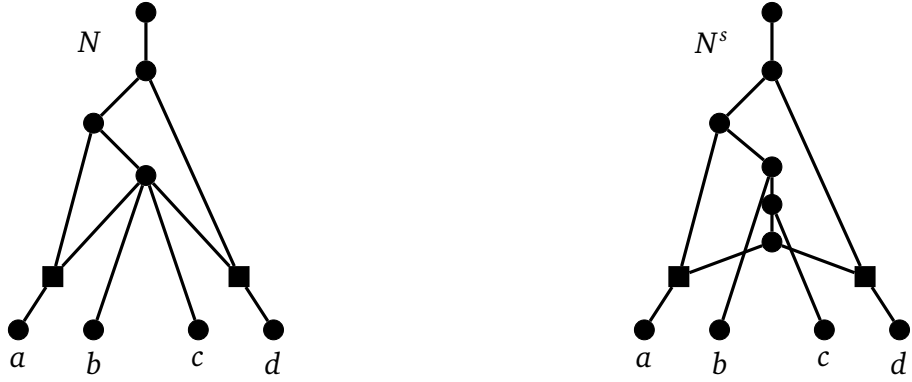

\begin{lem}\label{lem:ReducingLowestShapeGeneral}
    Let $N$ be a network where~$B(N)$ has a cherry cover $P$. 
    Suppose $A\in P$ is a lowest shape with endpoints $x$ and $y$ and an edge $p_yy$, where~$p_y$ is a tree vertex. 
    Then,
    \begin{itemize}
        \item $(x,y)$ is a reducible pair in $N$, and
        \item $B(N(x,y))$ has a cherry cover~$P\setminus \{A\}$ if~$p_y$ is a bifurcation; otherwise,~$B(N(x,y))$ has a cherry cover~$(P\setminus \{A\}) \cup\{Z\}$, where $Z$ is a shape with endpoint $y$.
    \end{itemize}
\end{lem}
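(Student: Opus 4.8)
The plan is to prove the two bullets separately. Throughout I would use that $A$ is lowest to pin down the endpoints of $A$, and Observation~\ref{obs:ReduceBulgedGeneral} to reduce the second bullet to exhibiting a cherry cover of $B(N)\setminus A$ (when $p_y$ is a bifurcation) or of $(B(N)\setminus A)\cup\{p_yy\}$ (when $p_y$ is a multifurcation). For the first bullet: since $A$ is lowest, no shape of $P$ has $x$ or $y$ as an internal vertex; as every non-root edge of $B(N)$ is covered and the tail of a covered edge is always an internal vertex of a covering shape, $x$ and $y$ have no outgoing edges, hence are leaves of $N$. If $A=\{p_yx,p_yy\}$ is a cherry shape, its internal vertex $p_y$ has the two distinct children $x,y$ in $B(N)$, so $p_y$ is not a reticulation of $B(N)$ and is the tree-vertex parent of $x$ and $y$ in $N$; thus $(x,y)$ is a cherry. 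If $A=\{p_xx,p_yp_x,p_yy\}$ is a reticulated cherry shape, then $p_yp_x$ and $p_yy$ are genuine edges of $N$ (because $p_y$ is a tree vertex), and since $p_y$ contributes only one edge into $p_x$ the vertex $p_x$ still has in-degree at least~$2$ in $N$, so it is a reticulation of $N$, and $p_xx$ descends from the unique edge of $N$ leaving $p_x$. Hence $\{p_xx,p_yp_x,p_yy\}$ is a reticulated cherry shape of $N$ with internal reticulation $p_x$, so $(x,y)$ is a reticulated cherry. Either way $(x,y)$ is reducible.

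For the second bullet, first take $p_y$ a bifurcation and the candidate cover $P\setminus\{A\}$. Here every edge of $A$ is covered by $A$ alone --- each such edge is a reticulation out-edge or a middle edge (covered exactly once by Definition~\ref{def:CherryCover}) or else has a bifurcation as its tail (hence is not doubly covered, by Lemma~\ref{lem:DoubleCoveredEdge}) --- so deleting $A$ and its edges leaves every other shape a valid shape of $B(N(x,y))$ and every remaining non-root edge still covered, and a short check shows no surviving reticulation out-edge or surviving middle edge can become covered the wrong number of times. So $P\setminus\{A\}$ is a cherry cover. Now take $p_y$ a multifurcation: the reduction deletes the edge of $A$ incident with the leaf $x$ and re-inserts $p_yy$. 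I would re-cover $p_yy$ by a single shape $Z$ with endpoint~$y$, making the cover $(P\setminus\{A\})\cup\{Z\}$ --- taking $Z$ to be a cherry shape $\{p_yy,p_yz\}$ for a surviving child $z\neq y$ of $p_y$ with $p_yz$ not the middle edge of any reticulated cherry shape of $P$. Such a $z$ exists by a short cycle argument on the out-edges of $p_y$ using the middle-edge condition of Definition~\ref{def:CherryCover} (if instead no such $z$ exists, then $p_yy$ is already covered by a surviving reticulated cherry shape, which serves as $Z$). Adding $Z$ re-covers $p_yy$, over-covers only $p_yz$ --- permitted, since $p_yz$ is neither a reticulation out-edge nor a middle edge --- and leaves both exactness conditions intact.

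The main obstacle is the multifurcation case: one must verify that deleting the edges of $A$ does not break the rest of the cover. When $p_y$ is a multifurcation, an edge of $A$ running into a leaf can be used by several shapes of $P$, so one has to argue --- from the minimality of $A$ and the re-insertion of $p_yy$ --- that a shape of $P$ meeting $A$ only through the re-inserted edge survives unchanged, while a shape meeting $A$ only through the deleted edge is re-routed through $p_yy$, the single added shape $Z$ capturing exactly this surgery; one then re-checks the three conditions of Definition~\ref{def:CherryCover}. By contrast, the reducibility of $(x,y)$ and the bifurcation case are routine once $x$ and $y$ are known to be leaves.
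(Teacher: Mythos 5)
Your proposal follows essentially the same route as the paper's proof: the lowest-shape argument forces $x$ and $y$ to be leaves, Observation~\ref{obs:ReduceBulgedGeneral} reduces the second bullet to covering $B(N)\setminus A$ (taking $P\setminus\{A\}$ when $p_y$ is a bifurcation), and in the multifurcation case you construct $Z=\{p_yy,p_yz\}$ from an outgoing edge of $p_y$ that is not a middle edge, using the same dichotomy that otherwise $p_yy$ would be the free edge of those reticulated cherry shapes and hence already covered. The only difference is that you explicitly flag the possibility of another shape sharing an edge of $A$ at the multifurcation (a point the paper's proof passes over silently); apart from that extra caution, the argument is the same.
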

\begin{proof}
    We first show that $x$ and $y$ are leaves in $B(N)$.
    Suppose for a contradiction that $x$ is not a leaf.
    Then it is either a tree vertex or a reticulation vertex. 
    In either case, $x$ has an outgoing edge which must be part of some shape $Y\in P$. 
    As $x$ is not the lowest vertex in this shape~$Y$, $x$ must be an internal vertex of $Y$. 
    This implies that $A$ is above $Y$, which contradicts the fact that $A$ is a lowest shape. 
    Hence, $x$ must be a leaf. By the same argument, $y$ is a leaf. 
    As the vertex labels do not change when considering bulged versions of networks, $x$ and $y$ are also leaves in $N$.
    We now split into two cases: either $A$ is a cherry shape, or $A$ is a reticulated cherry shape.

    First suppose $A=\{p_yx,p_yy\}$ is a cherry shape. 
    As $B(N)$ has edges $p_yx$ and $p_yy$, $N$ must also have such edges. 
    As $N$ has edges $p_yx$ and $p_yy$, and $x$ and $y$ are leaves, $N$ has the cherry $(x,y)$. This means $(x,y)$ is a reducible pair in $N$.
    Now suppose $A=\{p_xx,p_yp_x,p_yy\}$ is a reticulated cherry shape. 
    Then in $N$, there are also edges $p_xx$, $p_yp_x$, and $p_yy$. 
    As $x$ and $y$ are leaves in $N$ and $p_x$ is a reticulation vertex---by the properties of a cherry cover---$(x,y)$ is a reticulated cherry in $N$, which is a reducible pair.
    This proves the first part of the lemma. 
    
    
    For the second part of the lemma, we split the proof into two subcases.
    First suppose that $p_y$ is a bifurcation. 
    By assumption, $P$ is a cherry cover of $B(N)$, $A$ is an element of $P$, and by Observation~\ref{obs:ReduceBulgedGeneral}, we have $N(x,y)=B^{-1}(B(N)\setminus A)$.
    It follows that the set $P\setminus\{A\}$ is a cherry cover of $B(N(x,y))= B(B^{-1}(B(N)\setminus A))=B(N)\setminus A$.
    
    Now suppose that $p_y$ is a multifurcation. 
    Then, $B(N(x,y))=(B(N)\setminus A)\cup\{p_yy\}$ by Observation~\ref{obs:ReduceBulgedGeneral} and $P\setminus\{A\}$ covers all edges of $B(N)\setminus A$, so only the edge $p_yy$ may not be covered by~$P\setminus\{A\}$. 
    If the edge~$p_yy$ is covered by $P\setminus\{A\}$, then this is a cherry cover of $(B(N)\setminus A)\cup\{p_yy\}$ and therefore of $B(N(x,y))$ and we are done. 
    So suppose $p_yy$ is not covered by $P\setminus \{A\}$.
    Excluding the edge~$p_yy$, if all other outgoing edges of~$p_y$ formed the middle edge of reticulated cherry shapes, then~$p_yy$ must have formed the free edge of each of these reticulated cherry shapes.
    This implies that the edge~$p_yy$ must already have been covered by~$P\setminus \{A\}$, which is not true by our assumption.
    Therefore, there exists some outgoing edge~$p_yz$ of~$p_y$ that is covered by~$P\setminus\{A\}$, such that~$p_yz$ does not form the middle edge of a reticulated cherry shape. 
    Then, we obtain a cherry cover~$(P\setminus\{A\}) \cup \{p_yy,p_yz\}$ of~$(B(N)\setminus A) \cup \{p_yy\}$ and therefore of~$B(N(x,y))$.
\end{proof}

\begin{thm}\label{thm:CherryDecompositionorchard}
    A network $N$ is orchard if and only if $B(N)$ has an acyclic cherry cover.
\end{thm}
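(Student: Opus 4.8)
\textbf{The plan} is to prove the two implications separately, each by a short induction mirroring one of the two ways of ``shrinking'' a network: for the forward direction one peels off the first pair of a reducing sequence, and for the backward direction one peels off a lowest shape of the cover via Lemma~\ref{lem:ReducingLowestShapeGeneral}. In both directions the only genuine work is verifying that acyclicity of the cherry cover is preserved, and each time this comes down to a single clean observation.

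For the forward direction, suppose $N$ is orchard and let $(x_1,y_1),\dots,(x_m,y_m)$ be a sequence of reducible pairs reducing $N$ to a single-leaf network (discarding any pair that is not genuinely reducible at the moment it is applied changes nothing). I would induct on $m$. If $m=0$ then $N$ is a single leaf and the empty cover is an acyclic cherry cover of $B(N)$. If $m\ge 1$, let $A_1$ be the cherry or reticulated cherry shape of $B(N)$ corresponding to $(x_1,y_1)$ and put $N_1=N(x_1,y_1)$, which is orchard via the remaining $m-1$ pairs; by induction $B(N_1)$ has an acyclic cherry cover $P_1$. Using Observation~\ref{obs:ReduceBulgedGeneral} to compare $B(N)$ with $B(N_1)$, a routine check of the three conditions of Definition~\ref{def:CherryCover} shows $P_1\cup\{A_1\}$ is a cherry cover of $B(N)$: every edge of $B(N_1)$ lies in $B(N)$ and is still covered by $P_1$, and the edges of $B(N)$ not in $B(N_1)$ are exactly the edges of $A_1$ not re-added by the reduction, each covered once by $A_1$ (in particular the middle edge of $A_1$, if any, is covered only once). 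Finally $P_1\cup\{A_1\}$ is acyclic: the endpoints $x_1,y_1$ of $A_1$ are leaves of $N$, hence of $B(N)$, and a leaf is never an internal vertex of a cherry or reticulated cherry shape, so $A_1$ is above no shape, i.e.\ adjoining $A_1$ only adds a sink to the acyclic auxiliary digraph of $P_1$ (Definition~\ref{def:CherryShapeOrder}).

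For the backward direction, suppose $B(N)$ has an acyclic cherry cover $P$ and induct on the number of edges of $B(N)$. If $B(N)$ has one edge, then $N$ is a single-leaf network and hence orchard; otherwise $P\neq\emptyset$ and, being acyclic, $P$ has a lowest shape $A$. The opening argument in the proof of Lemma~\ref{lem:ReducingLowestShapeGeneral} shows the endpoints $x,y$ of $A$ are leaves, and the internal vertex $p_y$ of $A$ incident with $y$ is automatically a tree vertex (having outdegree at least two, it is neither a leaf, the root, nor a reticulation), so Lemma~\ref{lem:ReducingLowestShapeGeneral} applies: $(x,y)$ is a reducible pair and $B(N(x,y))$ has a cherry cover $P'$, equal to $P\setminus\{A\}$ if $p_y$ is a bifurcation and to $(P\setminus\{A\})\cup\{Z\}$ with $Z=\{p_yy,p_yz\}$ if $p_y$ is a multifurcation. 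I would then show $P'$ is acyclic. If $P'=P\setminus\{A\}$ this is immediate, since the auxiliary digraph of $P\setminus\{A\}$ is the subdigraph of that of $P$ it induces. For $P'=(P\setminus\{A\})\cup\{Z\}$, use that, by the construction of $Z$, the edge $p_yz$ is already covered by some $Y\in P\setminus\{A\}$ that does not use $p_yz$ as a middle edge; since $p_y$ is a tree vertex, $Y$ then has $p_y$ as an internal vertex and $z$ as an endpoint. A directed cycle of $(P\setminus\{A\})\cup\{Z\}$ would have to pass through $Z$, say $Z\to B_1\to\cdots\to B_n\to Z$ with $B_i\in P\setminus\{A\}$; then $z$ (the only non-leaf endpoint of $Z$) is an internal vertex of $B_1$, so $Y\to B_1$, and $p_y$ (the internal vertex of $Z$) is an endpoint of $B_n$, so $B_n\to Y$; hence $Y\to B_1\to\cdots\to B_n\to Y$ is a cycle inside $P\setminus\{A\}$, a contradiction. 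Since $B(N(x,y))$ has strictly fewer edges than $B(N)$, induction gives that $N(x,y)$ is orchard, and since $(x,y)$ is reducible in $N$, so is $N$.

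\textbf{Main obstacle.} I expect the delicate point to be precisely the acyclicity of $P'$ in the multifurcation case: one must rule out that the freshly introduced shape $Z$ closes a directed cycle, and the key trick is to transfer any such cycle onto the shape $Y$ that already covered $p_yz$, as above. Everything else — the two inductions, the sink argument, and the verification that $P_1\cup\{A_1\}$ meets the three cover conditions (which needs a little bookkeeping of parallel edges at reticulations of indegree greater than two) — should be routine given Lemma~\ref{lem:ReducingLowestShapeGeneral} and Observation~\ref{obs:ReduceBulgedGeneral}.
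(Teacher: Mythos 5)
Your proposal is correct and follows essentially the same route as the paper: the forward direction appends the shape of the reduced pair to an inductively obtained acyclic cover and notes it lies above nothing because its endpoints are leaves, while the backward direction repeatedly peels off a lowest shape via Lemma~\ref{lem:ReducingLowestShapeGeneral}. Your rerouting-through-$Y$ argument for the acyclicity of $(P\setminus\{A\})\cup\{Z\}$ in the multifurcation case carefully justifies a point the paper only asserts (``the order on the remaining shapes is not changed''), so it is a welcome refinement rather than a different approach; the only nitpick is that $p_y$ fails to be a reticulation of $B(N)$ not because it has outdegree at least two (bulged reticulations can too) but because it has two \emph{distinct} children, whereas all out-edges of a reticulation in $B(N)$ are parallel.
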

\begin{proof}
    First suppose that a network~$N$ is orchard.
    We prove by induction on the sum $S=n+r$ of the number of leaves $n$ and the reticulation number $r$ of~$N$ that $B(N)$ has an acyclic cherry cover. 
    The induction basis is the case with one leaf and no reticulations: the empty set is an acyclic cherry cover for such a network. 
    
    Now suppose that for each orchard network $N'$ with $n'+r'=S'$, $B(N')$ has an acyclic cherry cover. 
    We prove that for any network $N$ with $n+r=S=S'+1$, $B(N)$ has an acyclic cherry cover. 
    For this purpose, let $N$ be an orchard network with $n$ leaves and $r$ reticulations, such that $n+r=S=S'+1$, and let $(x,y)$ be a reducible pair in $N$. 
    Note that as~$N$ is an orchard network, such a reducible pair must exist.
    
    First suppose that the parent $p_y$ of $y$ is a bifurcation. 
    By Observation~\ref{obs:ReduceBulgedGeneral}, we have that $B(N(x,y))=B(N)\setminus A$, where $A$ is a cherry shape if $(x,y)$ is a cherry, and $A$ is a reticulated cherry shape if $(x,y)$ is a reticulated cherry. 
    By definition of orchard networks and reductions of reducible pairs,~$N(x,y)$ is an orchard network and the sum of its leaves and reticulations is $S'$. 
    By the induction hypothesis,~$B(N(x,y))$ has an acyclic cherry cover~$P$.
    We may obtain a cherry cover for~$B(N)$ by appending the shape~$A$ to~$P$.
    Therefore~$B(N)$ has a cherry cover~$P\cup\{A\}$.
    As the endpoints of~$A$ are leaves, the element~$A$ is above no other shape in~$P$.
    Therefore the cherry cover~$P\cup\{A\}$ is acyclic.
    
    Now suppose that the parent of $p_y$ is a multifurcation. 
    By Observation~\ref{obs:ReduceBulgedGeneral}, $B(N(x,y))=(B(N)\setminus A)\cup \{p_yy\}$, where $A$ is either a cherry shape or a reticulated cherry shape on $(x,y)$. 
    We have again that $N(x,y)$ is an orchard network, and the sum of its leaves and reticulations is $S'$. 
    By the induction hypothesis, this implies that there is an acyclic cherry cover $P$ of $B(N(x,y))\cup \{p_yy\}$, which gives a cherry cover $P\cup\{A\}$ of $B(N)$. 
    This cherry cover is acyclic because the new element $A$ is above no other shape as its endpoints are leaves. 
    
    Hence, for each orchard network~$N$ with a total $S'+1$ of leaves and reticulations, there is an acyclic cherry cover of $B(N)$.
    \medskip

    To prove the other direction of the theorem, suppose that $B(N)$ has an acyclic cherry cover $P$ and let $A\in P$ be a lowest shape with endpoints $x$ and $y$.
    Observe that such a lowest shape must exist as otherwise the cherry cover would not be acyclic.
    By Lemma~\ref{lem:ReducingLowestShapeGeneral}, $(x,y)$ is a reducible pair in $B(N)$, and $B(N(x,y))$ has a cherry cover $(P\setminus \{A\}) \cup\{Z\}$ or $P$, in which the order on the remaining shapes is not changed. 
    This means $B(N(x,y))$ is smaller than $B(N)$, and it has an acyclic cherry cover. 
    This process continues until $P=\emptyset$, and both $N$ and $B(N)$ are reduced to a single leaf network.
    Since we have successively reduced cherries or reticulated cherries from~$N$ to obtain a single leaf network, $N$ is an orchard network.
\end{proof}

We now prove a lemma that is analogous to Lemma~\ref{lem:TBiffResTB} for orchard networks using acyclic cherry covers.

\begin{lem}\label{lem:OrchardResolution}
    Let $N$ be a network.
    Then~$N$ is orchard if and only if some binary resolution of~$N$ is orchard.
    $N$ is orchard if and only if some semi-binary resolution of~$N$ is orchard.
\end{lem}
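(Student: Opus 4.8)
\emph{Proof plan.} The idea is to transfer everything through Theorem~\ref{thm:CherryDecompositionorchard}, exactly as Lemma~\ref{lem:TBiffResTB} was obtained by manipulating base trees. First, the statement reduces to two implications: (i) if $N$ is orchard then some binary resolution of $N$ is orchard, and (ii) if some semi-binary resolution of $N$ is orchard then $N$ is orchard. Indeed, a binary resolution of $N$ is also a semi-binary resolution of $N$, and a binary resolution of a semi-binary resolution of $N$ is a binary resolution of $N$, so the four implications of the lemma follow by composing (i) and (ii). Moreover, by Theorem~\ref{thm:CherryDecompositionorchard} (noting that for a semi-binary or binary network a cherry cover of the bulged version is just a cherry decomposition), (i) and (ii) are equivalent to statements purely about acyclic cherry covers of bulged versions.

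For (ii) I would argue by contracting, one edge at a time, the edges of the orchard semi-binary network $N^s$ whose contraction yields $N$, modifying at each step an acyclic cherry cover of the current bulged version into one of the next. Each such edge is either an edge $uv$ between two tree vertices (undoing one step of a multifurcation resolution) or an edge between two reticulations (undoing one step of a reticulation resolution). In the first case $uv$ is covered by a unique shape $C$ of the current cover, with $uv$ one of its two lower edges; after identifying $u$ with $v$, replace $C$ by the shape obtained from it by exchanging $uv$ for an outgoing edge of the merged vertex that descends from $v$ and is not a middle edge of any shape (such an edge exists: if both outgoing edges of the bifurcation $v$ were middle edges, a middle edge would be covered twice, contradicting Definition~\ref{def:CherryCover}). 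In the second case, the parallel copies of $uv$ and of the outgoing edge of $v$ in the bulged version merge into the parallel outgoing edges of the combined reticulation, and the reticulated cherry shapes that used a copy of $uv$ get reassigned to the freed copies, their middle edges unchanged. In both cases the three conditions of Definition~\ref{def:CherryCover} are preserved, and the shape order only loses relations, so acyclicity is preserved; iterating gives an acyclic cherry cover of $B(N)$, and $N$ is orchard by Theorem~\ref{thm:CherryDecompositionorchard}. (Equivalently, one could bypass cherry covers and induct on the length of a reduction sequence of $N^s$, using that a reducible pair of $N^s$ is still reducible in $N$ and that $N^s(x,y)$ is a semi-binary resolution of $N(x,y)$; but the cherry-cover version matches the spirit of the paper.)

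For (i) I would start from an acyclic cherry cover $P$ of $B(N)$ and build a binary resolution $N^b$ together with an acyclic cherry decomposition of $N^b$, reversing the contractions from (ii). The constraint is that, when a multifurcation $t$ is resolved into a caterpillar of bifurcations, the middle edge and free edge at $t$ of each reticulated cherry shape of $P$ must become the two children of a common new bifurcation (and similarly the bulged outgoing edges of a non-binary reticulation must be chained so that each reticulated cherry shape meeting that reticulation survives); the remaining outgoing edges of $t$ can be placed anywhere along the caterpillar, with fresh cherry shapes added for the new internal edges and the redundant cherry shapes of $P$ at $t$ (which exist only because $t$ is a multifurcation, by Lemma~\ref{lem:DoubleCoveredEdge}) discarded. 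As in (ii), the only new ``directly above'' relations run along the caterpillars and chains, which are paths, so acyclicity is preserved, and $N^b$ is orchard by Theorem~\ref{thm:CherryDecompositionorchard}. Alternatively one can fix a reduction sequence for $N$ and lift it to a binary resolution by induction: given an orchard binary resolution $M$ of $N(x,y)$, reattach a fresh binary gadget realising the reducible pair $(x,y)$ to obtain a binary resolution $N'$ of $N$ with $N'(x,y)=M$, so that prepending $(x,y)$ to a reduction sequence of $M$ witnesses that $N'$ is orchard.

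The step I expect to be the main obstacle is the compatibility in (i): showing that an acyclic cherry cover of $B(N)$ can always be realised by \emph{some} binary resolution, since the reticulated cherry shapes impose rigid local requirements — for instance, a single multifurcation child that is the free-edge endpoint of several reticulated cherry shapes of $P$ cannot be accommodated by a caterpillar of bifurcations, so such configurations must be ruled out or circumvented by choosing the cover suitably (or avoided entirely by using the reduction-sequence variant and checking that the ``reattach a gadget'' operation genuinely yields a binary resolution of $N$). The acyclicity bookkeeping in both directions, via Definition~\ref{def:CherryShapeOrder}, is also delicate but routine; everything else is formal once Theorem~\ref{thm:CherryDecompositionorchard} is available.
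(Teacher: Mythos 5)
Your direction (ii) --- contracting the edges that turn the orchard (semi-)binary resolution into $N$ one at a time while patching an acyclic cherry cover --- is essentially the paper's own argument: the paper proves exactly the claim that contracting an edge between two tree vertices or between two reticulations of an orchard network preserves being orchard, by replacing the contracted edge in every shape containing it with a suitable outgoing edge of the merged vertex and checking that the auxiliary order of Definition~\ref{def:CherryShapeOrder} only gains relations that were already implied, so acyclicity survives. Your existence argument for the replacement edge (both outgoing edges of a bifurcation cannot be middle edges, since each would then also be the other's free edge and a middle-covered edge would be covered twice) is correct and matches the paper's reasoning in spirit.

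For direction (i) the paper does something much shorter than what you propose: it simply cites Lemma~2 of \cite{janssen2018solving}, which asserts that an orchard network has an orchard binary resolution, and then observes that a binary resolution is in particular semi-binary. So your first route for (i) --- constructing a binary resolution that realises a given acyclic cherry cover of $B(N)$ --- is work the paper never attempts, and the obstacle you flag there is genuine: a cherry cover of $B(N)$ need not be liftable verbatim to any binary resolution (your example of one leaf serving as the free-edge endpoint of several reticulated cherry shapes at a multifurcation), and ``choosing the cover suitably'' is left unspecified, so as written that route has a gap. Your fallback, however, is sound and is in substance the proof of the cited lemma: induct on a reduction sequence of $N$, take an orchard binary resolution $M$ of $N(x,y)$, and reattach a binary gadget --- subdivide the pendant edge of $y$ (and of $x$ in the reticulated-cherry case) in $M$, add the middle edge, and contract the new subdivision vertex into the class of the corresponding parent exactly when that parent is non-binary in $N$ --- which yields a binary resolution $N'$ of $N$ with $N'(x,y)=M$, so prepending $(x,y)$ to a sequence for $M$ shows $N'$ is orchard. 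In short: your (ii) coincides with the paper, your (i) replaces a citation by a self-contained argument that works only if you commit to the reduction-sequence variant and drop (or substantially rework) the cover-lifting variant.
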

\begin{proof}
	We first assume that there exists some binary resolution~$N^b$ of~$N$ that is orchard, and independently, that there exists some semi-binary resolution~$N^s$ of~$N$ that is orchard.
	We claim that contracting an edge of an orchard network whose head and tail are both tree vertices or both reticulation vertices returns an orchard network.
	By definition, we may obtain~$N$ by contracting exactly these edges from~$N^b$ and from~$N^s$ (different edges for the two resolutions), from which it follows that~$N$ is orchard.
	We now prove the claim.
	
	Let~$M$ be an orchard network, and let~$st$ be an edge in~$M$ such that~$s$ and~$t$ are both tree vertices.
	We show that the network obtained by contracting~$st$ in~$M$ is orchard.
	By Theorem~\ref{thm:CherryDecompositionorchard},~$M$ has an acyclic cherry cover~$P$.
	The edge~$st$ is covered as an edge in a cherry shape or as a free edge in a reticulated cherry shape in~$P$ (or possibly both and multiple times, in case~$s$ is a multifurcation).
	Moreover, at least one of the outgoing edges of~$t$ is also covered as an edge in a cherry shape or as a free edge in a reticulated cherry shape in~$P$.
	Let us denote this edge by~$tu$.
	We now contract the edge~$st$, and replace the edge~$st$ that appeared in every shape in~$P$ by~$tu$.
	All other shapes of~$P$ are preserved and we call this new set of shapes~$P'$. 
	All edges of the contracted network are covered and it is easy to check that~$P'$ is a cherry cover.
	It remains to show that~$P'$ is an acyclic cherry cover.
	However this follows immediately.
	Indeed, the shapes in~$P$ that contained the edge~$st$ are no longer directly above the shapes in~$P$ that contained the vertex~$t$ as an internal vertex; furthermore, the shapes in~$P$ that contained the edge~$st$ are now directly above the shapes in~$P$ that contained the vertex~$u$ as an internal vertex.
	These new edges do not create a cycle in the auxiliary graph, as otherwise~$P$ would have been cyclic.
	
	Now let~$pq$ be an edge in~$M$ such that~$p$ and~$q$ are both reticulations.
	By definition of cherry covers, there must exist one incoming edge~$kp$ of~$p$ such that~$kp$ is covered as an edge in a cherry shape or as a free edge in a reticulated cherry shape in~$P$.
	Call this shape~$A$. 
	Let~$r$ be a child of~$q$.
	We now contract the edge~$pq$, and replace the edge~$pq$ that appeared in every shape in~$P$ by~$qr$.
	We preserve all other shapes of~$P$ and we call this new set of shapes~$P'$.
	Observe that~$P'$ forms a cherry cover of the contracted network, and that it is acyclic.
	The only difference between the auxiliary graph of~$P$ and the auxiliary graph of~$P'$ is that the arrow between shapes of~$P$ containing~$pq$ and the shapes of~$P$ containing~$qr$ has been deleted, and the arrow from~$A$ to shapes of~$P$ containing~$qr$ has been added.
	But~$A$ was already above these shapes in the auxiliary graph of~$P$.
	The same can be said for all reticulated cherry shapes that covered an incoming edge of~$p$ as the middle edge.
	This implies that the new cherry cover must be acyclic.
	
	Hence, contracting an edge of an orchard network whose head and tail are both tree vertices or both reticulation vertices returns an orchard network.
	Therefore the network~$N$ is orchard.
	\medskip
	
	To prove the other direction, suppose that~$N$ is an orchard network.
	By Lemma~$2$ in~\cite{janssen2018solving}, there exists a binary resolution of~$N$ that is orchard.
	Since a binary network is semi-binary, we are done.
\end{proof}

It was shown in~\cite{huber2019rooting} that binary orchard networks are tree-based.
It follows from Theorems~\ref{thm:CherryDecompositiontree-based} and~\ref{thm:CherryDecompositionorchard} that this is also true for the non-binary case.

\begin{cor}\label{cor:NBOisNBTB}
	All orchard networks are tree-based.
\end{cor}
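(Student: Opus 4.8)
The plan is to combine the two main characterization theorems of this paper. Let $N$ be an orchard network. By Theorem~\ref{thm:CherryDecompositionorchard}, the bulged version $B(N)$ has an \emph{acyclic} cherry cover $P$. In particular, $P$ is a cherry cover of $B(N)$. Now invoke Theorem~\ref{thm:CherryDecompositiontree-based}: since $B(N)$ has a cherry cover (acyclicity is simply an extra property we do not need here), $N$ is tree-based. This is the entire argument; no new combinatorial work is required beyond observing that ``acyclic cherry cover'' is a strengthening of ``cherry cover.''

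If one wants to spell out the inclusion slightly more carefully: an acyclic cherry cover is by Definition~\ref{def:CherryShapeOrder} nothing other than a cherry cover (in the sense of Definition~\ref{def:CherryCover}) that additionally satisfies the acyclicity condition on the ``above'' relation among its shapes. Hence the set of networks whose bulged version admits an acyclic cherry cover is a subset of the set of networks whose bulged version admits a cherry cover. Theorem~\ref{thm:CherryDecompositionorchard} identifies the former set with the orchard networks, and Theorem~\ref{thm:CherryDecompositiontree-based} identifies the latter with the tree-based networks, so every orchard network is tree-based.

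There is essentially no obstacle here; the only thing to check is that the bulged-version bookkeeping is consistent between the two theorems, which it is, since both are phrased in terms of $B(N)$ and the operation $B(\cdot)$ is the same throughout. One could also give a direct proof by taking the reduction sequence $S$ witnessing that $N$ is orchard and building a base tree by, at each reduction step, retaining the ``free edge'' of a reticulated cherry (or one edge of a cherry) and discarding the reticulation edge — this mirrors the construction in the second half of the proof of Theorem~\ref{thm:CherryDecompositiontree-based} — but routing everything through the cherry-cover characterizations is the cleanest route and is the one I would present.
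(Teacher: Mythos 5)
Your proposal is correct and is exactly the paper's intended argument: the corollary follows by noting that an acyclic cherry cover of $B(N)$ (guaranteed by Theorem~\ref{thm:CherryDecompositionorchard} for orchard $N$) is in particular a cherry cover, so Theorem~\ref{thm:CherryDecompositiontree-based} gives tree-basedness. No differences worth noting.
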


\section{Discussion}

In this paper we have provided a unifying structural characterization for 
\leoo{tree-based networks and orchard networks} using cherry covers.
We have shown that a binary network is tree-based if and only if it can be decomposed into cherry shapes and reticulated cherry shapes. 
A binary network is orchard if such a decomposition exists that also satisfies a certain acyclicity condition. 
Moreover, we have generalized these characterizations to non-binary networks by considering bulged versions of the networks and using covers rather than decompositions.
Prior to having this characterization, orchard networks were characterized only by the sequences that reduced them.
Therefore we have provided the first structural \leoo{(non-recursive)} characterization for orchard networks.

Structural characterizations for many network classes have generally focused more on `forbidden structures' rather than on decompositions.
Tree-based networks cannot contain a maximum zig-zag path that starts and ends at a reticulation (\emph{W-shapes}); tree-child networks cannot contain adjacent reticulation vertices nor tree vertices with only reticulation children.
While structures such as crowns (a bipartite graph between some subset of the tree vertices and reticulations that contains an undirected cycle) and W-shapes cannot be contained in orchard networks, it remains open whether orchard networks can be characterized by a list of forbidden substructures. 


In the other direction, it may be of interest to extend our cherry cover results to characterize other network classes that are contained in the class of tree-based networks.
Since (the bulged versions of) these networks have a cherry cover, this may be possible by imposing additional conditions on the cherry cover.
\leoo{Finding such} characterization\leoo{s} for all known network classes, such as tree-child, reticulation-visible, and stack-free, will truly bring to light a \emph{unifying} structural characterization of \leoo{known} phylogenetic network \leoo{classes}.

Outside of characterizing network classes, cherry covers can be used to prove other results within phylogenetics.
One particular case in which this could have been useful is in the setting of chain reductions as done in the paper~\cite{huber2019rooting}.
In that paper, it was shown that leaves may be added to, and some leaves may be removed from orchard and tree-based networks to obtain a network that was still orchard or tree-based (in particular, Lemmas~$10, 11$ and~$13$).
Employing cherry covers to prove these results is more concise, since adjusting the cherry cover of networks after such actions is easier than trying to alter, say, the sequence for the network (for orchards).

Another area in which cherry covers may be useful is in solving enumeration problems, which is formulated as follows.
Given parameters~$n$ and~$r$, find the number of distinct networks on~$n$ leaves with reticulation number~$r$.
There \leoo{exist} cherry covers for such networks that contain~$n-1$ cherry shapes and~$r$ reticulated cherry shapes by Corollary~\ref{cor:CherryCoverCount}---can we somehow count all possible arrangements of these shapes to enumerate the space of both network classes?
Perhaps, for non-binary networks, this line of attack will be too complicated due to shapes being able to cover the same edges.
However, for binary networks this may be viable, as each edge of the network is covered exactly once in a cherry decomposition by Lemma~\ref{lem:CherryCoverCount}.

On the algorithmic front, one may find a cherry cover for a tree-based network and an acyclic cherry cover for an orchard network in polynomial time.
For orchard networks, we may find reducible pairs, cover the edges involved using the steps outlined in the proof of Theorem~\ref{thm:CherryDecompositionorchard}, reduce the shape, and continue until an acyclic cherry cover is obtained.
Since we may pick reducible pairs from orchard networks in any order \cite{janssen2018solving, erdHos2019class}, this bottom-up approach provides a polynomial time algorithm for finding an acyclic cherry cover of an orchard network.
For tree-based networks, we first find a base tree in polynomial time with the matching approach used in the proof of Theorem~$3.4$ in~\cite{jetten2018nonbinary}.
Then, one may follow the steps outlined in the proof of Theorem~\ref{thm:CherryDecompositiontree-based} of this paper to convert the cherry cover of this base tree to a cherry cover of the network in polynomial time.
Without the base tree however, it is not clear if there is a systematic way of obtaining a cherry cover.
Indeed, it is not enough to naively cover the edges in any fashion (e.g., bottom-up), as shown in Figure~\ref{fig:NaiveCovering}.
We wonder if it would be possible to directly obtain a cherry cover of a tree-based network without first having to find a base tree; and if this is the case, can it be done faster than the algorithm presented in~\cite{jetten2018nonbinary}?

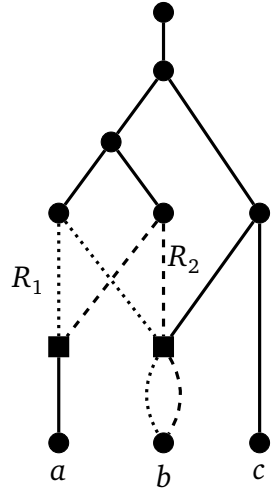
\begin{figure}
	\centering
	\begin{tikzpicture}[every node/.style={draw, circle, fill, inner sep=0pt},
	square/.style={regular polygon, regular polygon sides=4}]
    	\tikzset{edge/.style={very thick}}
	       	\node[] (root) at (0,5)	{a};
	       	
	       	\node[below = 0.5 of root] 				(t1)	{a};
		\node[below left = 0.75 and 0.5 of t1]		(t2)	{a};
		\node[below left = 0.75 and 0.5 of t2]		(t3)	{a};
		\node[below right = 0.75 and 0.5 of t2]	(t4)	{a};
		\node[right = 1 of t4]					(t5)	{a};
			
		\node[square, below = 1.5 of t3]	(r1)	{a};
		\node[square, below = 1.5 of t4]	(r2)	{a};

		\node[below = 1 of r1]		(l1)	{a};
		\node[below = 1 of r2]		(l2)	{a};
		\node[right = 1 of l2]			(l3)	{a};
		
		\draw[edge]			(root) -- (t1);
		\draw[edge]			(t1)--(t2);
		\draw[edge]			(t1)--(t5);
		\draw[edge]			(t2)--(t3);
		\draw[edge]			(t2)--(t4);
		
		\draw[edge, dotted]		(t3)--(r1) node[draw = none, fill = none, left = 1mm, midway]{\large $R_1$};
		\draw[edge, dashed]		(t4)--(r2) node[draw = none, fill = none, above right = 1mm, midway]{\large $R_2$};
		
		\draw[edge, dotted]		(t3)--(r2);
		\draw[edge, dashed]		(t4)--(r1);
		\draw[edge]			(t5)--(r2);
		
		\draw[edge]				(r1)--(l1);
		\draw[edge, bend left, dashed]	(r2)edge(l2);
		\draw[edge, bend right, dotted]	(r2)edge(l2);
		\draw[edge]				(t5)--	(l3);
		
               	\node[draw = none, fill = none, below=1mm of l1]  	(a) {\large $a$};
               	\node[draw = none, fill = none, below=1mm of l2]  	(b) {\large $b$};
               	\node[draw = none, fill = none, below=1mm of l3]	(c) {\large $c$};
			
	 \end{tikzpicture}
	 \caption{The bulged version of the tree-based network from Figure~\ref{fig:TBCherryCover}, in which we cover some of the edges with arbitrary reticulated cherry shapes~$R_1$ and~$R_2$.
	 Since the edge incident to the leaf~$a$ can no longer be covered by any reticulated cherry shape, there exists no cherry cover that contains both~$R_1$ and~$R_2$.}
	 \label{fig:NaiveCovering}

\end{figure}



\bibliographystyle{alpha}

\end{document}